\newtheorem{theorem}{Theorem}[section]
\newtheorem{proposition}[theorem]{Proposition}
\newtheorem{lemma}[theorem]{Lemma}
\newtheorem{corollary}[theorem]{Corollary}
\newtheorem{conjecture}[theorem]{Conjecture}
\theoremstyle{definition}
\newtheorem{example}[theorem]{Example}
\newtheorem{remark}[theorem]{Remark}
\newcommand{\cF}{\mathcal{F}}
\newcommand{\bC}{\mathbb{C}}
\newcommand{\bX}{\mathbb{X}}
\newcommand{\ee}{\end{equation}}
\newcommand {\al}{\alpha}
\newcommand {\la}{\lambda}
\newcommand {\om}{\omega}
\newcommand{\be}{\beta}
\newcommand{\bP}{\mathbb{P}}
\newcommand{\bN}{\mathbb N}
\newcommand{\bZ}{\mathbb Z}
\newcommand \rk {\mathrm{rk}}
\newcommand \HF {\mathrm{HF}}
\newcommand \HS {\mathrm{HS}}
\newcommand{\VP}{\mathrm{VP}}
\begin{document}
          \numberwithin{equation}{section}

          \title[On generic and maximal $k$-ranks of binary forms]
          {On generic and  maximal $k$-ranks of binary forms}

\author[S.~Lundqvist]{Samuel Lundqvist}
\author[A.~Oneto]{Alessandro Oneto}
\author[B.~Reznick]{Bruce Reznick}
\author[B.~Shapiro]{Boris Shapiro}

\address[S.~Lundqvist, B.~Shapiro]{Department of Mathematics, Stockholm University, SE-106 91
Stockholm,  Sweden} \email{samuel@math.su.se, shapiro@math.su.se}

\address[A.~Oneto]{Inria Sophia Antipolis M\'editerran\'ee (team Aromath), 06902 Sophia Antipolis, France} \email{alessandro.oneto@inria.fr}

\address[B.~Reznick]{Department of Mathematics, University of Illinois, Urbana, IL 61801, USA} \email{reznick@illinois.edu}

\dedicatory{To James Joseph Sylvester, a mathematician, a polyglot, and a poet}
\date{}

\keywords{sums of powers, binary forms, Waring problem, generic rank, maximal rank, secant varieties} 
\subjclass[2010]{Primary 15A21,\; Secondary   15A69, 14N15}

\begin{abstract} In what follows, we pose two general conjectures {about decompositions of homogeneous polynomials as sums of powers}. The first one ({suggested by} G.~Ottaviani) deals with the generic $k$-rank of complex-valued forms of any degree divisible by $k$ in any number of variables. The second one (by the fourth author) deals with the maximal $k$-rank of binary forms. We settle the first conjecture in the cases of two variables and the second in the first non-trivial  case of the $3$-rd  powers of quadratic binary forms. 
\end{abstract}

\maketitle

\section{Introduction}  
A vast and currently very active area of mathematical research dealing with additive decompositions of polynomials started with the following classical result on binary forms proven in 1851 by J.~J.~Sylvester\footnote{{James Joseph (Sylvester) was born to a Jewish family in London in 1814. His remarkably original and successful mathematical career only partially helped him overcome the pervasive anti-Semitism of his era. For more on his life, see \cite{P}.}} \cite{Syl1,Syl2}.
\begin{theorem}[Sylvester's Theorem]\label{th:Sylv}
\rm{(i)} A general binary form $p\in\bC[x,y]$ of odd degree $k=2s-1$ with complex coefficients can be written as
\begin{equation}\label{eq:odd}
 p(x, y) =\sum_{j=1}^s(\al_{j}x+\be_{j}y)^k, \text{ for some }\al_j,\be_j\in \bC.
\end{equation}
\noindent
\rm{(ii)} A general binary form $p\in\bC[x,y]$ of even degree $k=2s$ with complex coefficients can be written as 
\begin{equation}\label{eq:even}
 p(x,y)=\la x^k +\sum_{j=1}^s(\al_{j}x+\be_{j}y)^k, \text{ for some }\lambda,\al_j,\be_j\in \bC.
\end{equation}
\end{theorem}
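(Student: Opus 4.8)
The plan is to reconstruct Sylvester's own argument, which I find cleanest to run through the ``Prony system'' attached to $p$ (equivalently, through the apolar ideal $p^{\perp}$ and the associated Hankel/catalecticant matrices). Write $p$ in the scaled monomial basis as $p(x,y)=\sum_{i=0}^{k}\binom{k}{i}a_i\,x^{k-i}y^i$, so that the power $(\al x+\be y)^k$ contributes $a_i=\al^{k-i}\be^{i}$. After a generic linear change of variables we may assume that none of the linear forms sought is proportional to $y$; setting $t_j=\be_j/\al_j$ and $c_j=\al_j^{k}$, representation \eqref{eq:odd} becomes exactly the system $a_i=\sum_{j=1}^{s}c_j t_j^{\,i}$ for $i=0,\dots,k$, while \eqref{eq:even} becomes the same system with $a_0$ replaced by $a_0-\la$. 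The one elementary fact used throughout is the recursion: if $t_1,\dots,t_s$ are the roots of the monic polynomial $g(t)=t^{s}-e_1t^{s-1}+\dots+(-1)^{s}e_s$, then for every index $i$ for which the terms are defined one has $a_{i+s}-e_1a_{i+s-1}+\dots+(-1)^{s}e_s a_i=\sum_{j}c_j t_j^{\,i}g(t_j)=0$.

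\emph{Odd case $k=2s-1$.} Here the recursion furnishes precisely $s$ linear equations for the $s$ unknowns $e_1,\dots,e_s$, with coefficient matrix the Hankel matrix $[a_{i+j}]_{0\le i,j\le s-1}$ and right-hand side built from $a_s,\dots,a_{2s-1}$. First I would show this Hankel matrix is invertible for generic $p$; this determines $g$ uniquely. Next I would show $g$ has $s$ distinct roots $t_1,\dots,t_s$ for generic $p$. Then the Vandermonde system $a_i=\sum_j c_j t_j^{\,i}$ ($i=0,\dots,s-1$) has a unique solution $(c_j)$, and the remaining equations $i=s,\dots,2s-1$ follow automatically: both sides satisfy the same order-$s$ linear recurrence and agree on the first $s$ values. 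Finally $c_j\neq 0$ for generic $p$, so one may choose $\al_j$ with $\al_j^{k}=c_j$ (this is where $\bC$ algebraically closed enters) and $\be_j=t_j\al_j$, giving \eqref{eq:odd}.

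\emph{Even case $k=2s$.} Now the recursion gives $s+1$ equations ($i=0,\dots,s$). The $s$ of them with $i\ge 1$ involve only $a_1,\dots,a_{2s}$ and again form a square Hankel system fixing $(e_1,\dots,e_s)$ for generic $p$; the leftover equation with $i=0$ is affine in $\la$ and, as long as $e_s\neq 0$ (generic), determines $\la$ uniquely. From this point one proceeds exactly as in the odd case: distinct roots of $g$, a (shifted) Vandermonde system for the $c_j$, the higher equations for free by the recurrence, nonvanishing of the $c_j$, and extraction of $k$-th roots over $\bC$, yielding \eqref{eq:even}.

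The real content, and the step I expect to be the main obstacle, is the handful of genericity assertions invoked above: invertibility of the relevant square Hankel matrix, squarefreeness of $g$, non-vanishing of all $c_j$, and (in the even case) $e_s\neq 0$. Each of these is a Zariski-open condition on the coefficients of $p$, so it is enough to produce a single $p$ where they all hold; for this I would take $p=\sum_{j=1}^{s}(\al_j x+\be_j y)^{k}$ (respectively $p=\la x^{k}+\sum_{j=1}^{s}(\al_j x+\be_j y)^{k}$) with $\al_j,\be_j,\la$ chosen at random. For such a $p$ the relevant Hankel matrices factor through Vandermonde matrices in the $t_j=\be_j/\al_j$ — schematically $[a_{i+j}]=V\operatorname{diag}(c_j)V^{\mathsf T}$ — hence are invertible once the $t_j$ are distinct and the $c_j=\al_j^{k}$ (and, in the even case, each $t_j$) are nonzero; the unique monic solution $g$ is then forced to be $\prod_j(t-t_j)$, which is squarefree; the $c_j$ are nonzero by construction; and $e_s=\prod_j t_j\neq 0$. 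Upper semicontinuity transfers all of this to a general $p$. (Equivalently and more geometrically, one can verify that the $s$-th secant variety of the rational normal curve $\nu_k(\mathbb{P}^{1})\subset\mathbb{P}^{k}$ has the expected dimension $\min(2s-1,k)$ — which equals $k$ exactly when $k=2s-1$ — and, when $k=2s$, take the join with one further point of the curve to fill $\mathbb{P}^{k}$; the non-defectivity input is again exactly the Vandermonde computation above.)
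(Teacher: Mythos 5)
Your argument is correct, and in fact the paper never proves Theorem~\ref{th:Sylv} at all: it is quoted as a classical result with references to Sylvester's original papers and to \cite{CS} for a modern exposition, so there is no in-paper proof to compare against line by line. What you have written is a sound reconstruction of the classical Prony/Hankel argument, and all the delicate points are handled: the $s$ (resp.\ shifted $s$) recurrence equations do have the Hankel coefficient matrix you claim, the ``higher equations for free'' step is justified because both sequences satisfy the same order-$s$ recurrence and agree on $s$ consecutive values, and the genericity assertions are correctly reduced to Zariski-openness plus a single explicit witness of the form $\sum_j(\al_jx+\be_jy)^k$, where the factorization $[a_{i+j}]=V\,\mathrm{diag}(c_j)\,V^{\mathsf T}$ settles everything. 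Two small remarks. First, the opening reduction ``after a generic change of variables no sought form is proportional to $y$'' is unnecessary: your construction only ever produces forms with $\al_j\neq0$, so nothing needs to be assumed. Second, in the even case you can avoid the hypothesis $e_s\neq0$ entirely by simply defining $\la:=a_0-\sum_j c_j$ after solving the Vandermonde system on $i=1,\dots,s$; the $i=0$ recurrence equation is then not needed. Conceptually, your route is the same as the catalecticant/apolarity method the paper recalls later (Lemma~\ref{lemma:apolarity} and Example~\ref{ex: sylvester}): the Hankel matrix is the middle catalecticant and $g$ generates the relevant part of $p^\perp$. It differs from the machinery the paper actually uses for its new results (Terracini's Lemma plus Hilbert genericity, as in the proof of Theorem~\ref{th:genkrank}), which for $d=1$ recovers the generic rank $\lceil (k+1)/2\rceil$ but not literally the canonical form \eqref{eq:even} with the distinguished term $\la x^k$; your explicit approach does yield that exact shape, which is the stronger statement (i)--(ii) as stated.
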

\begin{figure}[h]
\includegraphics[scale=0.7]{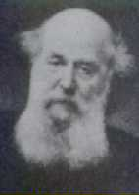}
\caption{ J.~J.~Sylvester around 1890.}
\label{fig1}
\end{figure}
After Sylvester's work, decompositions of polynomials into sums of powers of linear forms have been widely studied  from several perspectives starting with  the geometrical point of view by the classic  Italian school in algebraic geometry in the beginning of the 20-th century as well as current research by applied mathematicians and engineers  in connection with  tensor decompositions.

\medskip
Such presentations are often called {\it Waring decompositions} and, for a given polynomial $f$, the smallest length of such a decomposition is called the {\it Waring rank}, or simply, the {\it rank} of $f$. The minimal number of linear forms required to represent a general form of degree $k$ in $n$ variables as a sum  of their $k$-th powers is called the {\it generic rank} and denoted by $\rk^\circ(n,k)$, while the {\it maximal rank} $\rk^{\max}(n,k)$ is the minimal number of linear forms required to represent {\it any} form of degree $k$ in $n$ variables. Rephrasing Theorem \ref{th:Sylv} in this terminology, we have that 
$$
 \rk^{\circ}(2,k) = \left\lceil \frac{k+1}{2} \right\rceil.
$$
The explicit value of the generic {Waring} rank {for} any arbitrary $k$ and $n$ was obtained in the celebrated result of J.~Alexander and A.~Hirschowitz \cite{AH}. Except for the case of quadrics in all dimensions and four additional exceptions 
{$(n,k) = (3,4), (4,4), (5,3), (5,4)$}, the generic rank coincides with its expected value given by 
$$
 \rk^{\circ}(n,k) = \left\lceil \frac{1}{n}\binom{n+k-1}{k} \right\rceil.
$$
{In the case of quadrics, the generic rank is equal to $n$, while} in all the exceptional cases the generic rank is by $1$ bigger than the latter expected value.  

\medskip
{
	Additionally, the maximal Waring rank $\rk^{\max}(2,k)$ of binary forms equals $k$. This was probably a classical result, but it has been recently proved in \cite{CS}. Also, the maximal value $k$ is attained exactly on binary forms of the type $l_1l_2^{k-1},$ where $l_1$ and $l_2$ are linearly independent linear forms, see \cite[Theorem 5.4]{Re2}.
}

\medskip
Other types of additive decompositions of polynomials have been considered in the last decades. In \cite{FOS}, the fourth author jointly with R.~Fr\"oberg and G.~Ottaviani considered, for any triple of positive integers $(k,d,n)$ with $k,n \geq 2$, decompositions of homogeneous polynomials of degree $kd$ in $n$ variables as sums of $k$-th powers of forms of degree $d$. Given a form $f$ of degree $kd$, the smallest length of such a decomposition  denoted by $\rk_k(f)$ is called the {\it $k$-rank} of $f$. Analogously to the classical Waring rank, we define the {\it generic $k$-rank} for forms of degree $kd$ in $n$ variables, denoted by $\rk_k^{\circ}(n,kd)$, and the corresponding {\it maximal $k$-rank}, denoted by $\rk^{\max}_k(n,kd)$.

\medskip
The main result of \cite{FOS} is the following upper-bound on the generic $k$-rank, for any triple $(k,d,n)$, 
$$
 \rk^\circ_k(n,kd) \leq k^n.
$$
A remarkable property of this bound is its independence of the parameter $d$ and  also  its sharpness for any fixed $k$ and $n,$ when $d \gg 0$. The following  general conjecture about  $\rk^\circ_k(n,kd)$ was suggested  by G.~Ottaviani in 2013 (private communication). For $k=2$, it also coincides with \cite[Conjecture 1]{LSVB}.
\begin{conjecture}\label{conj:otta} For any triple $(k,d,n)$ of positive integers with $k,n,d \geq 2$, 
\begin{equation}\label{eq:main}
\rk^\circ_k(n,kd)=\begin{cases} \min \left\{s\ge 1 ~|~ s\binom{d+n-1}{n-1}-\binom {s}{2}\ge \binom{2d+n-1}{n-1}\right\}, & \text{ for } k=2;\\
\min \left\{s\ge 1 ~|~ s\binom{d+n-1}{n-1}\ge \binom{kd+n-1}{n-1}\right\}, & \text{ for } k\ge 3.
\end{cases}
\end{equation}
\end{conjecture}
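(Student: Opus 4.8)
The plan is to translate the conjecture into a statement about the ranks of certain linear maps via Terracini's lemma, and then to reduce everything to a single surjectivity. Writing $R=\bC[x_1,\dots,x_n]$ and $R_e$ for its degree-$e$ piece, the affine cone over the variety $V_{k,d}=\{g^k:g\in R_d\}\subset R_{kd}$ is parametrized by $g\mapsto g^k$, whose differential at $g$ sends $h$ to $kg^{k-1}h$; hence the affine tangent space to $V_{k,d}$ at $g^k$ is $g^{k-1}R_d$. By Terracini's lemma the tangent space to the $s$-th secant variety $\sigma_s(V_{k,d})$ at a generic point $\sum_i g_i^k$ is $\sum_{i=1}^s g_i^{k-1}R_d$, so $\rk_k^\circ(n,kd)$ is the least $s$ for which
\[
  \Phi_s:\bigoplus_{i=1}^s R_d\longrightarrow R_{kd},\qquad (a_1,\dots,a_s)\longmapsto \sum_{i=1}^s g_i^{k-1}a_i,
\]
is surjective for generic $g_1,\dots,g_s\in R_d$. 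The conjectured formula is exactly the \emph{expected} threshold for this surjectivity. For $k\ge 3$ a coprimality-and-degree argument gives $g_i^{k-1}R_d\cap g_j^{k-1}R_d=0$ for generic $g_i,g_j$ (if $g_i^{k-1}a=g_j^{k-1}b$ then $g_j^{k-1}\mid a$, impossible since $\deg a=d<(k-1)d$), so the summands are independent and the threshold is $\min\{s:s\binom{d+n-1}{n-1}\ge\binom{kd+n-1}{n-1}\}$; for $k=2$ one has $g_iR_d\cap g_jR_d=\bC\, g_ig_j$, which forces $\binom s2$ independent relations among the products $g_ig_j$ and lowers the expected image dimension to $s\binom{d+n-1}{n-1}-\binom s2$.

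With this reduction the lower bound $\rk_k^\circ(n,kd)\ge$ formula is immediate: for $k\ge3$ one has $\dim(\mathrm{im}\,\Phi_s)\le s\binom{d+n-1}{n-1}$, and for $k=2$ the forced overlaps $\bC\,g_ig_j$ cut this down to $s\binom{d+n-1}{n-1}-\binom s2$, so below the stated value $s_0$ on the right-hand side of \eqref{eq:main} the map cannot be onto. Everything therefore rests on proving that $\Phi_{s_0}$ is surjective for generic $g_i$. Since maximality of rank is a Zariski-open condition on $(g_1,\dots,g_{s_0})$, it suffices to exhibit a \emph{single} favourable configuration at which $\Phi_{s_0}$ is onto, which gives us wide latitude in choosing the $g_i$. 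The plan is to obtain such a configuration by induction on the number of variables $n$ via a Horace (Castelnuovo) degeneration: restrict to a generic hyperplane $H=\{x_n=0\}$, use the exact sequence $0\to x_n R_{kd-1}\to R_{kd}\to R_{kd}(H)\to 0$, and split the $g_i$ into a \emph{trace} group, specialized so that the restrictions $(g_i|_H)^{k-1}R_d(H)$ surject onto $R_{kd}(H)$, and a \emph{residual} group accounting for the kernel $x_nR_{kd-1}$. Balancing the two groups so that the induced trace and residual maps are simultaneously surjective reduces the statement in $n$ variables to instances in $n-1$ variables (for the trace) and in $n$ variables of lower degree (for the residual), to be closed by a numerical induction matching the arithmetic of \eqref{eq:main}.

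Such an induction needs base cases. For $n=2$ the modules $g_i^{k-1}R_d$ are completely transparent: each binary $g_i$ factors into linear forms, the apolar ideal of $g_i^{k-1}$ is a complete intersection generated in two degrees, and surjectivity of $\Phi_{s_0}$ becomes a Hankel/catalecticant rank computation that can be verified directly; this is the case established in the present paper, and it anchors the recursion. The small-$d$ instances and the boundary cases where $s_0\binom{d+n-1}{n-1}$ meets $\binom{kd+n-1}{n-1}$ exactly must also be checked by hand, and—when the naive Horace count lands one unit off—replaced by the \emph{differential} Horace method of Alexander–Hirschowitz \cite{AH}, degenerating some $g_i$ toward powers of linear forms so as to import the non-defectivity already known in the classical Waring setting.

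The hard part will be precisely the feature that distinguishes this problem from \cite{AH}. There the tangent spaces are $\ell_i^{k-1}R_1$, apolar to first-order fat points, so secant defectivity is governed by the well-developed theory of linear systems with double points; here the tangent space $g_i^{k-1}R_d$ at a $g_i$ of degree $d\ge2$ is \emph{not} the tangent space to a fat point, and its apolar scheme is the non-reduced scheme cut out by powers of $g_i$. Consequently the clean dictionary between secant dimensions and fat-point Hilbert functions is unavailable, and the Horace bookkeeping must be carried out directly at the level of the modules $g_i^{k-1}R_d$: the delicate point is to control, after a good degeneration, both the restriction $(g_i|_H)^{k-1}R_d(H)$ and the residual module inside $x_nR_{kd-1}$ so that the inductive hypotheses apply. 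For $k\ge3$ the vanishing $g_i^{k-1}R_d\cap g_j^{k-1}R_d=0$ keeps the summands independent and should turn each step into a transversality statement that propagates; for $k=2$ one must in addition transport the $\binom s2$ correction faithfully through every reduction. Verifying that the required degenerations exist in each residual stratum—not merely in the binary base case—is where the genuine difficulty lies, and is the step on which a proof of the full conjecture for $n\ge3$ hinges.
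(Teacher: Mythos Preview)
The statement you are addressing is Conjecture~\ref{conj:otta}, which the paper does \emph{not} prove in general; it remains open for $n\ge 3$ (apart from $k=2,\,n=3$, handled in Corollary~\ref{thm:appendix_small_secants}). Your proposal is likewise not a proof but a strategy, and you say so explicitly in your final paragraph. So the comparison is between two partial approaches, not two proofs.

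The paper's route is algebraic rather than geometric. After the same Terracini reduction you carry out, it identifies the codimension of $\sigma_s$ with the Hilbert function $\HF_{S/I}(kd)$ for $I=(g_1^{k-1},\dots,g_s^{k-1})$, and then shows (Theorem~\ref{thm:appendix_Froberg} and Lemma~\ref{lemma:1}) that if such ideals are Hilbert generic in the sense of Conjecture~\ref{conj:C}, then Conjecture~\ref{conj:otta} follows. This buys the binary case immediately (Lemma~\ref{lemma:Hilbert gen ideal}: specialize the $g_i$ to powers of generic linear forms) and the ternary square case via Anick's theorem. Your Horace/degeneration plan is a genuinely different line of attack, closer in spirit to the original Alexander--Hirschowitz argument, but as you yourself note the modules $g_i^{k-1}R_d$ with $d\ge 2$ have no fat-point interpretation, so the existing differential-Horace machinery does not transfer, and you have not supplied the required degenerations. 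That gap is real and is exactly why the conjecture is still open.

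Two technical points on what you do write. First, for $k\ge 3$ your pairwise computation $g_i^{k-1}R_d\cap g_j^{k-1}R_d=0$ does not make ``the summands independent'': pairwise trivial intersection does not force a direct sum once $s\ge 3$, so it does not yield $\dim\mathrm{im}\,\Phi_s=s\binom{d+n-1}{n-1}$. (The \emph{lower} bound $\rk_k^\circ\ge s_0$ needs only the trivial estimate $\dim\mathrm{im}\,\Phi_s\le s\dim R_d$, so it survives; the equality is precisely the open surjectivity statement.) Second, for $k=2$ your lower bound uses linear independence of the $\binom s2$ Koszul syzygies $(0,\dots,g_j,\dots,-g_i,\dots,0)$; this holds when the $g_i$ are linearly independent in $R_d$, hence for generic $g_i$ provided $s\le\binom{d+n-1}{n-1}$, but that hypothesis should be stated.
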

Conjecture~\ref{conj:otta} is  supported by  substantial computer experiments performed by G.~Ottaviani and the present authors, see e.g. \cite{One}. A proof of Conjecture~\ref{conj:otta} would extend the Alexander--Hirschowitz Theorem (which corresponds to $d=1$) to the case of arbitrary triples $(k,d,n)$ and  will complete the important project of determining generic ranks for symmetric tensors. 

{
In this paper, we
}{mainly consider the case of binary forms.}

\setcounter{section}{2}
\setcounter{theorem}{2}
\begin{theorem}\label{th:genkrank}
For $k,d \geq 2$, the generic $k$-rank of binary forms of degree $kd$ is
 $$\rk^\circ_k(2,kd)=\left\lceil\frac{{kd+1}}{d+1}\right\rceil.$$
\end{theorem} 
This result extends Theorem~\ref{th:Sylv} to presentations of general binary forms of degree $kd$ as sums of $k$-th powers of binary forms of degree $d$  and gives a proof of Conjecture~\ref{conj:otta} in the case of binary forms. 

\setcounter{section}{1}

An alternative proof of Theorem~\ref{th:genkrank} using canonical forms can be found in \cite[Theorem 1.8]{Re1}. Our approach puts this result in a much more general setting{; in particular, we relate Conjecture \ref{conj:otta} to Fr\"oberg's Conjecture on Hilbert series of generic ideals. This {relation extends} to higher dimensions, as we explain in Appendix \ref{app:Froberg}, where we also settle the case of sums of squares in three variables $(k = 2, n = 3)$.}

\medskip
In Fall 2014, the fourth author formulated the following conjecture about the maximal rank of binary forms. 

\begin{conjecture}\label{conj:maxkrank}
For $k\geq 2$, the maximal $k$-rank of binary forms of degree $kd$ is
 $$\rk^{\max}_k(2,kd)=k.$$
\end{conjecture}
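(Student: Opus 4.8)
The plan is to prove the two inequalities $\rk^{\max}_k(2,kd)\ge k$ and $\rk^{\max}_k(2,kd)\le k$ separately, the second being the genuinely hard one. Throughout I would use that $\rk^{\max}_k(2,kd)\ge \rk^{\circ}_k(2,kd)=\lceil (kd+1)/(d+1)\rceil$ by Theorem~\ref{th:genkrank}, together with the identity $(kd+1)/(d+1)=k-(k-1)/(d+1)$, which shows that the generic $k$-rank already equals $k$ precisely when $d\ge k-1$. Hence for $d\ge k-1$ the lower bound is immediate. The subtlety is that for $d<k-1$ the generic value drops below $k$, so the maximal rank must be strictly larger than the generic one; this is the familiar phenomenon that a dense family of forms is cheap to decompose while a special locus is expensive, and it is exactly why the border-rank information coming from secant varieties (namely that $\sigma_k$ fills the whole space) does not by itself suffice.

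For the lower bound when $d<k-1$, I would exhibit an extremal form and obstruct its short decompositions. The natural candidate, generalizing the classical worst form $xy^{k-1}$ for ordinary Waring rank, is $f=xy^{kd-1}$, which has a simple root at $[0:1]$ and a root of multiplicity $kd-1$ at $[1:0]$. Suppose $f=\sum_{i=1}^s q_i^k$; dehomogenizing at $[1:0]$ and writing $g_i(y)=q_i(1,y)$ with $\deg g_i\le d$, the requirement becomes $\sum_{i=1}^s g_i(y)^k=y^{kd-1}$. Since $y^{kd-1}$ has its only root at the origin and $kd-1$ is not a multiple of $k$, factoring $a^k+b^k$ through the $k$-th roots of $-1$ (for $s=2$) or analyzing the achievable order of vanishing more generally forces each linear combination appearing in the factorization to be a pure power of $y$, which pushes the $g_i$ into a space too small to meet all the constraints. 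Distilling this into a clean combinatorial lemma bounding the maximal vanishing order of $\sum_{i=1}^s q_i^k$ in terms of $s,k,d$ would give $\rk_k(f)\ge k$, hence the lower bound.

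The main obstacle is the upper bound $\rk^{\max}_k(2,kd)\le k$, i.e.\ that every binary form of degree $kd$ is an honest (not merely limiting) sum of $k$ many $k$-th powers of degree-$d$ forms. The case $k=2$ is clean and should be recorded first: any $f$ of degree $2d$ factors over $\bC$ as $f=gh$ with $\deg g=\deg h=d$ by splitting its $2d$ roots, and then $f=q_1^2+q_2^2$ with $q_1=(g+h)/2$ and $q_2=(g-h)/(2\sqrt{-1})$. For $k\ge 3$ this factorization trick breaks down: using a primitive $k$-th root of unity $\om$ and the discrete Fourier transform $L_j=\sum_i\om^{(i-1)j}q_i$, the power sum $\sum_i q_i^k$ is no longer a single product $\prod_j L_j$ but differs from it by lower-order terms (for $k=3$, by the extra summand $3q_1q_2q_3$), so prescribing the $L_j$ from a factorization of $f$ does not immediately solve the system. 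The strategy would then be to recast the decomposition as surjectivity of the polynomial map $\Phi\colon(\bC^{d+1})^k\to\bC^{kd+1}$ sending $(q_1,\dots,q_k)$ to $\sum_i q_i^k$; dominance is essentially known, since the fibre dimension count leaves $k-1$ free parameters, matching generic rank $\le k$, and the whole difficulty is to upgrade dominance to surjectivity by controlling the boundary locus of forms with repeated roots, where the true rank can exceed the border rank.

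I expect this last step, proving surjectivity of $\Phi$ rather than mere dominance, to be the crux, and it is presumably why the present paper settles only the first non-trivial instance $k=3,\,d=2$. A realistic route is to generalize the $k=3,d=2$ argument: solve the circulant system explicitly for forms with distinct roots, then treat multiple-root forms by a dedicated boundary analysis (degenerating within the fibres while keeping the number of summands equal to $k$, or by direct construction near the degenerate locus), so that no form is lost upon passing to limits. Once both bounds are established, the extremal form $xy^{kd-1}$ has $k$-rank exactly $k$, confirming that the value $k$ is attained and completing the proof of Conjecture~\ref{conj:maxkrank}.
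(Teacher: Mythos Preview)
The statement you are attempting to prove is a \emph{conjecture} that the paper explicitly leaves open in general: the only cases settled are the classical $d=1$ (cited), the easy $k=2$ (cited from \cite{FOS}), and the new instance $k=3$, $d=2$ (Theorem~\ref{th:sextic}). There is therefore no ``paper's own proof'' to compare against; what you have written is a research outline, and it should be assessed as such.

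Your diagnosis of the structure is accurate: the lower bound is easy for $d\ge k-1$ via the generic rank, and the genuine content is the upper bound (surjectivity, not mere dominance, of the power-sum map). Your $k=2$ argument is correct and is exactly the one the paper cites. But both remaining pieces have real gaps.

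For the lower bound when $d<k-1$, your obstruction for $xy^{kd-1}$ is not an argument. The factorization of $a^k+b^k$ only treats $s=2$, and for general $s$ you write that ``distilling this into a clean combinatorial lemma \ldots\ would give $\rk_k(f)\ge k$'' --- that is a hope, not a proof. The paper itself regards computing $\rk_k(xy^{kd-1})$ as hard: it only manages $\rk_4(xy^7)=4$ by an explicit catalecticant calculation over the entire fibre (Example~\ref{example:xy7}), and in Appendix~\ref{appendix: monomials} it shows that many nearby monomials have $k$-rank strictly below $k$, so there is no soft reason your candidate works.

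For the upper bound, your proposed route --- solve generically, then repair the boundary by a one-parameter deformation --- is precisely the shape of the paper's proof of Theorem~\ref{th:sextic}: subtract one cube to reduce to $y^3 c(x,y)$, apply Sylvester to the binary cubic $c$, and when $c$ degenerates (acquires a square factor) pass to $p_T(x,y)=p(x,Tx+y)$ and show the relevant discriminant in $T$ is not identically zero. That argument is already a delicate case analysis for $(k,d)=(3,2)$, with several subcases handled by explicit lowest-order-term computations; you give no mechanism for how it would scale to arbitrary $(k,d)$, and the ``degenerating within fibres while keeping the number of summands equal to $k$'' step is exactly the unresolved difficulty. In short, your proposal is a reasonable map of the problem, but neither half constitutes a proof, and the paper makes no stronger claim.
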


{As we said, the case $d = 1$ is classical and well-known, see \cite[Theorem 4.9]{Re2}. Moreover, by \cite[Theorem 5.4]{Re2}, we know that a binary form has maximal rank if and only if it can be decomposed as $\ell_1\ell_2^{k-1}$. Also, it is easy to prove that any binary polynomial of even degree can be decomposed as a sum two squares, see \cite[Theorem 5]{FOS}. In this paper, we prove Conjecture~\ref{conj:maxkrank} in the first open case of binary sextics decomposed as sums of cubes ($k=3, d = 2, n = 2$).}

\setcounter{section}{3}
\setcounter{theorem}{0}
\begin{theorem}\label{th:sextic}
Every binary sextic can be written as a sum of at most three cubes of binary quadratic forms.
\end{theorem}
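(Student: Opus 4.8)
The plan is to reduce the statement to a claim about sums of \emph{two} cubes and then exploit the factorization
\[
 q_1^3+q_2^3=(q_1+q_2)(q_1+\omega q_2)(q_1+\omega^2 q_2),\qquad \omega=e^{2\pi i/3}.
\]
If $f$ is itself a cube of a quadratic, or a sum of two cubes of quadratics, there is nothing to prove, so assume it is neither; the goal is then to find a binary quadratic $q_3$ for which $g:=f-q_3^3$ is a sum of at most two cubes of quadratics, since then $f=q_3^3+g$. From the displayed factorization, a nonzero binary sextic $g$ is a sum of at most two cubes of quadratics precisely when $g$ is a perfect cube, or $g=\ell_0\ell_1\ell_2$ for pairwise linearly independent quadratics $\ell_0,\ell_1,\ell_2$ that are collinear as points of $\mathbb{P}(\mathbb{C}[x,y]_2)\cong\mathbb{P}^2$: the nontrivial direction is that $\mathrm{PGL}_2$ acts $3$-transitively on the line spanned by $\ell_0,\ell_1,\ell_2$, so after a change of basis in that plane and a common rescaling one may take $\ell_j=q_1+\omega^j q_2$. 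In particular the set of sums of at most two cubes of quadratics is the affine cone $\widehat W$ over the secant variety $\sigma_2(X)\subset\mathbb{P}^6$, where $X=\{[q^3]:q\in\mathbb{C}[x,y]_2\}$.

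Next I would note that $\sigma_2(X)$ is a hypersurface. By Terracini, $\dim\sigma_2(X)=\dim\langle q_0^2\,\mathbb{C}[x,y]_2,\;q_1^2\,\mathbb{C}[x,y]_2\rangle$ for generic $q_0,q_1$, and these two three-dimensional spaces meet only in $0$ since a nonzero sextic cannot be divisible by $q_0^2q_1^2$; hence $\dim\sigma_2(X)=5$, and $\sigma_2(X)\ne\mathbb{P}^6$ by Theorem~\ref{th:genkrank}. Let $\Psi$ be a nonconstant defining form of $\widehat W$ in the seven coefficients of a sextic. For our fixed $f\notin\widehat W$, set $\psi(q_3):=\Psi(f-q_3^3)$, a polynomial in the three coefficients of $q_3$. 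Since $\psi(0)=\Psi(f)\neq0$ we have $\psi\not\equiv0$; if $\psi$ is \emph{not} constant, then over $\mathbb{C}$ it has a zero $q_3^{\ast}$, so $f-(q_3^{\ast})^3$ lies on $\sigma_2(X)$, and taking the zero on the dense open locus where the factorization criterion applies (thus avoiding the lower–dimensional boundary of $\sigma_2(X)$, which consists of degenerations such as products $q^2m$), we conclude $f-(q_3^{\ast})^3$ is a genuine sum of two cubes, and are done.

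The crux is therefore the case where $\psi$ is a nonzero constant, i.e.\ $f-q_3^3\notin\sigma_2(X)$ for every quadratic $q_3$. Writing $\Psi(f-w)=\sum_{j\ge0}\tfrac{(-1)^j}{j!}\,D^j\Psi(f)(w,\dots,w)$ with $w=q_3^3$, the terms have pairwise distinct degrees $3j$ in $q_3$, so constancy of $\psi$ forces, for each $1\le j<\deg\Psi$, the degree-$j$ form $v\mapsto D^j\Psi(f)(v,\dots,v)$ to vanish on $X$; since the cubes $q^3$ — already the sixth powers $\ell^6$ — span $\mathbb{C}[x,y]_6$, the case $j=1$ yields $\nabla\Psi(f)=0$, and working through the low-degree elements of the ideal $I(X)$ confines $f$ to a proper closed subvariety. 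Combined with the $\mathrm{GL}_2$-classification of binary sextics by the multiplicity partition of their roots, this reduces the surviving $f$ to a short explicit list of normal forms — in particular the finitely many orbits with at most three distinct roots, represented by $x^6$, $x^5y$, $x^4y^2$, $x^4y(x-y)$, $x^3y^3$, $x^3y^2(x-y)$, $\bigl(xy(x-y)\bigr)^2$, together with at most one or two one-parameter families — for each of which one exhibits a decomposition into at most three cubes by hand (for instance $x^6=(x^2)^3$; in the remaining cases the relevant system is underdetermined, with $9>7$ parameters, and is solved by restricting each $q_i$ to two or three monomials). I expect the genuine difficulty to be exactly this last step: ruling out that $\psi$ can be a nonzero constant except for these highly degenerate sextics, and then producing explicit decompositions for them, together with the routine but necessary check, in the main case, that a point of $\sigma_2(X)$ supplied by the Nullstellensatz can be chosen off the degenerate boundary so as to be an honest sum of two cubes. (A purely computational variant, treating all the $\mathrm{GL}_2$-orbits of sextics directly via the factorization criterion, should also work.)
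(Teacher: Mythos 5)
Your geometric framing (the cone $\widehat W$ over the hypersurface $\sigma_2(X)\subset\mathbb{P}^6$, the Terracini dimension count, and the characterization of honest sums of two cubes via the factorization $q_1^3+q_2^3=\prod_j(q_1+\omega^j q_2)$) is correct as far as it goes, but the argument has genuine gaps exactly at the points you yourself flag as "the genuine difficulty", so what you have is a strategy, not a proof. First, the reduction "assume $f$ is not a sum of at most two cubes, so $f\notin\widehat W$" is invalid: $\sigma_2(X)$ is a closure, and its points are either honest secant points or points $q^2h$ on tangent lines; a sextic such as $x^4y^2$ lies in $\widehat W$ without being a sum of two cubes, and for such $f$ you have $\psi(0)=\Psi(f)=0$, so your argument that $\psi\not\equiv 0$ (and the whole dichotomy built on it) collapses and this case is simply not treated. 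Second, even when $\Psi(f)\neq 0$ and $\psi$ is nonconstant, the claim that a zero $q_3^{\ast}$ of $\psi$ can be chosen so that $f-(q_3^{\ast})^3$ avoids the tangent-line locus $\{q^2h\}$ is not a routine genericity check: the image of $\{\psi=0\}$ under $q_3\mapsto f-q_3^3$ is only $2$-dimensional, while the cone over the bad locus is $5$-dimensional, so no dimension count prevents the entire zero set from landing in the bad locus; an actual argument is required, and indeed the paper's proof shows this degeneration really occurs for natural choices of $q_3$ (that is precisely the case $p=q^3+y^3(rx+sy)^2(tx+uy)$ that consumes most of its effort). Third, in the constant-$\psi$ case, beyond the correct observation that $\nabla\Psi(f)=0$ (cubes of quadratics span $S_6$), the passage from "low-degree elements of $I(X)$" to a "short explicit list of normal forms", and the explicit decompositions for those normal forms, are asserted rather than proved.

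For comparison, the paper takes a much more explicit route: after a linear change making $a_0\neq 0$, it completes the cube with an explicit quadratic $q$ so that $p=a_0q^3+a_0^{-5}y^3c(x,y)$ with $c$ a binary cubic, applies Sylvester's classical result that a cubic with no repeated factor is a sum of two cubes of linear forms (giving $p$ as a sum of three cubes whenever a discriminant-type polynomial $D(p)$ is nonzero), and then disposes of the degenerate locus $D(p)=0$ by normalizing to two families $p^{(1)},p^{(2)}$, shearing $p^{(j)}_T(x,y)=p^{(j)}(x,Tx+y)$, and verifying by direct computation of lowest-order terms in $T$ that $D(p^{(j)}_T)$ is not identically zero. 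In other words, the entire substance of the theorem lies in the degenerate configurations that your proposal defers; to complete your approach you would need to (i) handle $f\in\widehat W$ that are not sums of two cubes, (ii) prove the boundary-avoidance statement for zeros of $\psi$, and (iii) carry out the classification and explicit decompositions in the constant-$\psi$ case, and there is no indication these are easier than the paper's computations.
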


\setcounter{section}{1}

{
 One can also suspect that $\rk^{\max}_k(l_1 l_2^{kd-1}) = k$, where $l_1$ and $l_2$ are non-proportional linear binary forms, {similarly to what happens} in the classical case. From \cite{CO}, we know that this is an upper bound, but computing the actual $k$-rank is a very difficult task. In Section \ref{sec: computing ranks}, we explain how our geometric approach can help in these computations; e.g., we can show that $\rk^{\max}_4(l_1 l_2^{7}) = 4$. In Appendix \ref{appendix: monomials}, we continue the work started in \cite{CO} and we prove {that a class of monomials has} $k$-rank smaller than $k$. 
}

\medskip
\noindent
\emph{Acknowledgements.} We want to thank  the participants of the problem-solving seminar in commutative algebra at Stockholm University and especially  Ralf Fr\"oberg for creating  a nice research atmosphere.  It is a pleasure to acknowledge the importance of our communication with Giorgio Ottaviani for the present study.  Finally, the fourth author is sincerely grateful to the University of Illinois, Urbana-Champaign for the hospitality in summer 2015 when a part of this project was carried out. 

\section{Generic $k$-ranks}\label{General rank}
\noindent In this section we focus on {\it generic $k$-ranks}.
   Let {$S = \bigoplus_{d\geq 0}S_d$} be the standard graded polynomial ring in $n$ variables with complex coefficients, where $S_d$ denotes the vector space of degree $d$ forms. By a count of parameters, we have a lower bound for the generic $k$-rank of forms of degree $kd$, i.e.,
$$\rk^\circ_k(n,kd)\ge \left\lceil\frac{\dim_{\bC} S_{d}}{\dim_{\bC}S_d} \right\rceil=\left\lceil \frac{{kd+n-1 \choose n-1}}{{d+n-1 \choose n-1}}\right\rceil.$$

\subsection{Secant varieties and Terracini's Lemma.}
A common approach to analyse Waring-type problems is to study {\it secant varieties}. We describe it in our context. 

Let $\VP_{n,kd}^{(k)}$ be the {\it variety of $k$-th powers} in the space of polynomials of degree $kd$, i.e.,
$$
 \VP_{n,kd}^{(k)} = \left\{[g^k] ~|~ g\in S_d\right\} \subset \bP(S_{kd}).
$$ 
In the case where $d = 1$, this is the classical {\it Veronese variety} ${\rm Ver}_{n,k}$. 

The {\it $s$-th secant variety} of $\VP_{n,kd}^{(k)}$ is the closure of the set of points that lie on the linear span of $s$ points on the variety of powers, i.e.,
$$
 \sigma_s \left(\VP_{n,kd}^{(k)}\right)  =  \overline{\bigcup_{g_1,\ldots,g_s \in S_d}\left\langle [g^k_1],\ldots, [g_s^k]\right\rangle} \subset \bP(S_{kd}),
$$
in other words, the $s$-th secant variety is the closure of the set of homogeneous polynomials of degree $kd$ with $k$-th rank {at most} $s$. Observe that, since $\VP_{n,kd}^{(k)}$ is non-degenerate, secant varieties give a filtration and eventually fill the ambient space. These definitions {allow us to} describe the $k$-th generic rank as
$$
 \rk^\circ_k(n,kd) = \min\left\{ s ~|~ \sigma_s \left(\VP_{n,kd}^{(k)}\right)= \bP(S_{kd})\right\}.
$$
Therefore, one way to understand generic $k$-ranks is to compute the dimensions of secant varieties. In order to do so, a classical tool is Terracini's Lemma which describes the generic tangent space to a {\it secant variety}, see \cite{Te}. In our setting, it is equivalent to the following statement.
\begin{lemma}[Terracini's Lemma]\label{Terracini}
 Let $P$ be a generic point on the linear span of $s$ generic points on the variety of $k$-th powers, $[g_1^k],\ldots,[g_s^k]\in \VP_{n,kd}^{(k)}$. Then the tangent space to $\sigma_s\left(\VP_{n,kd}^{(k)}\right)$ at $P$ coincides with the linear space of the tangent spaces to $\VP_{n,kd}^{(k)}$ at the $s$ points, i.e., 
 $$
  T_P \sigma_s\left(\VP_{n,kd}^{(k)}\right) = \left\langle T_{[g_1^k]}\VP_{n,kd}^{(k)},\ldots,T_{[g_s^k]}\VP_{n,kd}^{(k)}\right\rangle \subset \bP(S_{kd}).
 $$
\end{lemma}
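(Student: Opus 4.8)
The plan is to prove Terracini's Lemma by the standard parametric (differential) argument, working with affine cones and the chain rule. Let $\widehat{\VP}_{n,kd}^{(k)} \subset S_{kd}$ denote the affine cone over $\VP_{n,kd}^{(k)}$, and let $\widehat{\sigma}_s \subset S_{kd}$ denote the affine cone over $\sigma_s(\VP_{n,kd}^{(k)})$. Since we work over $\bC$, every scalar multiple $\lambda g^k$ is again a $k$-th power, namely $(\lambda^{1/k}g)^k$, so the cone over the union of the spans is exactly the closure of the image of the sum-of-powers map
$$F \colon S_d^{\oplus s} \longrightarrow S_{kd}, \qquad F(g_1,\ldots,g_s) = g_1^k + \cdots + g_s^k,$$
that is, $\widehat{\sigma}_s = \overline{\operatorname{im} F}$. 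A generic point $P$ on the span of $s$ generic points $[g_1^k],\ldots,[g_s^k]$, say $P = \sum_i \lambda_i g_i^k$, can after rescaling each $g_i$ be written as $F(g_1,\ldots,g_s)$ with $g_1,\ldots,g_s$ generic; thus $P$ lifts to a generic point of $\operatorname{im} F$.

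First I would compute the differential of $F$. By the Leibniz rule,
$$dF_{(g_1,\ldots,g_s)}(h_1,\ldots,h_s) = k\sum_{i=1}^s g_i^{k-1}h_i, \qquad h_i \in S_d,$$
so as $(h_1,\ldots,h_s)$ ranges over $S_d^{\oplus s}$ the image of $dF_{(g_1,\ldots,g_s)}$ is precisely the subspace
$$\sum_{i=1}^s g_i^{k-1}\cdot S_d = \big\langle\, g_1^{k-1}S_d,\ldots,g_s^{k-1}S_d \,\big\rangle \subset S_{kd}.$$
On the other hand, the cone $\widehat{\VP}_{n,kd}^{(k)}$ is parametrized by $\nu\colon g \mapsto g^k$, whose differential $d\nu_g(h)=kg^{k-1}h$ has image $g^{k-1}S_d$; for generic $g$ this is the affine tangent space to $\widehat{\VP}_{n,kd}^{(k)}$ at $g^k$. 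Hence the image of $dF$ is identified with $\big\langle T_{g_1^k}\widehat{\VP}_{n,kd}^{(k)},\ldots,T_{g_s^k}\widehat{\VP}_{n,kd}^{(k)}\big\rangle$, the span of the affine tangent spaces.

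The key step is then to invoke the basic principle of generic smoothness: since $\operatorname{char}\bC = 0$, for the dominant morphism $F\colon S_d^{\oplus s} \to \widehat{\sigma}_s$ onto the irreducible variety $\widehat{\sigma}_s = \overline{\operatorname{im}F}$, at a generic point of the source the rank of $dF$ attains its maximum, equal to $\dim\widehat{\sigma}_s$, and $\operatorname{im} dF$ coincides with the affine tangent space $T_{F(g_1,\ldots,g_s)}\widehat{\sigma}_s$. Genericity of the $g_i$ (equivalently, of $P$ on the span) places us at such a point, so that, combining with the computation above,
$$T_P\,\widehat{\sigma}_s = \big\langle T_{g_1^k}\widehat{\VP}_{n,kd}^{(k)},\ldots,T_{g_s^k}\widehat{\VP}_{n,kd}^{(k)}\big\rangle.$$
Passing from affine cones to their projectivizations — under which a linear subspace of $S_{kd}$ through the origin maps to its projectivization, and the affine tangent space at $g_i^k$ corresponds to $T_{[g_i^k]}\VP_{n,kd}^{(k)}$ — yields exactly the stated equality.

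I expect the only genuine obstacle to be the generic-smoothness step, i.e. the assertion that at a general point of the source the image of the differential of a parametrization fills out the tangent space to the closure of the image. This is where irreducibility of $\widehat{\sigma}_s$, characteristic zero, and the genericity hypotheses on $g_1,\ldots,g_s$ and on $P$ are all essential; everything else reduces to the formal chain-rule computation above. Alternatively, one could argue classically by sweeping out $\sigma_s\left(\VP_{n,kd}^{(k)}\right)$ through the family of spans and tracking tangent directions, but the differential argument is the most self-contained.
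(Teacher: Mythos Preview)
The paper does not give its own proof of this lemma; it states Terracini's Lemma as a classical result and simply cites Terracini's original paper \cite{Te}. Your differential/parametric argument via the sum-of-powers map $F(g_1,\ldots,g_s)=\sum_i g_i^k$ and generic smoothness in characteristic zero is the standard modern proof and is correct; there is nothing to compare it against in the paper itself.
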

By Lemma~\ref{Terracini}, we can reduce the problem of computing generic ranks to a question in commutative algebra. Let us recall some basic definitions.

Given a homogeneous ideal $I\subset S$, we have an induced grading of $I$ and of the quotient ring $S/I$. We define the {\it Hilbert function} of $S/I$ {{\it in degree $d$} as {the dimension of the vector space formed by} the degree $d$ part of $S/I$, i.e.,}
$$\HF_{S/I}(d)=\dim_{\bC}[S/I]_d=\dim_{\bC}S_d-\dim_{\bC}I_d,~~\text{for }d\geq 0;$$
and the {\it Hilbert series} of $S/I$ by 
$$\HS_{S/I}(t)=\sum_{d\geq 0}\HF_{S/I}(d)t^d\in\bZ [\![ t ]\!].$$ 
It is easy to observe that the tangent space to the variety of powers at a point $[g^k]\in T_{[g_1^k]}\VP_{n,kd}^{(k)}$ is
$$
T_{[g^k]}\VP_{n,kd}^{(k)} = \{[g^{k-1}h] ~|~ h\in S_d\}.
$$
Therefore, under the genericity assumptions of Lemma \ref{Terracini}, we have that
$$
 T_P \sigma_s\left(\VP_{n,kd}^{(k)}\right) = \bP(I_{kd}) \subset \bP(S_{kd}),
$$
where $I_{kd}$ is the homogeneous part of degree $kd$ of the ideal $I = (g_1^{k-1},\ldots,g_s^{k-1})$, {where the $g_i$'s are general forms of degree $d$.} In particular, {the codimension of the secant variety is}
\begin{equation}\label{Terracini_codim}
 {\rm codim} ~\sigma_s\left(\VP_{n,kd}^{(k)}\right) = \HF_{S/I}(kd) - 1,
\end{equation}
and the generic $k$-rank  $\rk^\circ_k(d,n)$  is equal to the minimal $s$ such that $\HF_{S/I}(kd)=0$.

\subsection{Generic ranks of binary forms.} We consider now the case of binary forms. 

In \cite{Fr}, R.~Fr\"oberg observed that, for any ideal $I = (h_1,\ldots,h_s) \subset S$ with $\deg(h_i) = d_i$, for $i = 1,\ldots,s$, 
\begin{equation}\label{FHS}
\HS_{S/I}(t) \succeq \left\lceil \frac{\prod_{i=1}^s (1-t^{d_i})}{(1-t)^{n}}\right\rceil,
\end{equation}
where $\lceil\cdot\rceil$ {stands for} the truncation of the power series at the first non-positive coefficient and the {latter} inequality is in the lexicographic sense. An ideal {for which} \eqref{FHS} is an equality is called {\it Hilbert generic}. {\it Fr\"oberg's Conjecture} claims that generic ideals are Hilbert generic. We return to this in Appendix \ref{app:Froberg}. 

A crucial observation is that the property of being Hilbert generic is a Zariski open condition on the space of ideals in $S$ with a given number of generators of given degrees; see \cite{FL}. 
\begin{lemma}\label{lemma:Hilbert gen ideal}
 Let $g_1,\ldots,g_s$ be generic binary forms of degrees $d_1,\ldots,d_s$, respectively. Then, for any $s$-tuple of positive integers $k_1,\ldots,k_s$, the ideal $(g_1^{k_1},\ldots,g_s^{k_s})\subset S = \bC[x,y]$ is Hilbert generic. 
\end{lemma}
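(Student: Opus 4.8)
The plan is to reduce, using the openness of the Hilbert generic locus, to the construction of a single suitable tuple, and then to produce one by induction on $s$, the Strong Lefschetz property of Artinian quotients of $\bC[x,y]$ serving as the engine.

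First I would note that it suffices to exhibit \emph{one} tuple $(g_1,\dots,g_s)$ of binary forms of the prescribed degrees for which $I=(g_1^{k_1},\dots,g_s^{k_s})$ is Hilbert generic. Indeed, $\HF_{S/I}(e)=\dim_\bC S_e-\rk\bigl(\bigoplus_i S_{e-k_id_i}\to S_e\bigr)$ is upper semicontinuous in $(g_1,\dots,g_s)$, while Fr\"oberg's inequality \eqref{FHS} gives $\HF_{S/I}(e)\ge B(e)$ for \emph{every} tuple, where $B(t)=\lceil\prod_i(1-t^{k_id_i})/(1-t)^2\rceil$; hence the locus where $\HF_{S/I}=B$ is Zariski open and is dense as soon as it is nonempty. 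I would also record the explicit shape of the bound in two variables: the sequence $\varphi(e):=(e+1)-\sum_{i=1}^s\max(0,e-k_id_i+1)$ is concave, so once it becomes non-positive it stays so, and while it is positive it agrees with the coefficients of $\prod_i(1-t^{k_id_i})/(1-t)^2$ (the higher inclusion--exclusion terms vanishing there); thus $B(e)=\max(0,\varphi(e))$.

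Then I would construct the tuple by induction on $s$. For $s=1$ the ideal is principal and $\HF_{S/(g_1^{k_1})}(e)=\min(e+1,k_1d_1)=B(e)$. For $s=2$ a generic pair $g_1,g_2$ is coprime, so $S/(g_1^{k_1},g_2^{k_2})$ is a complete intersection and $\HS_{S/I}=(1-t^{k_1d_1})(1-t^{k_2d_2})/(1-t)^2$, a series with non-negative coefficients, so no truncation occurs and $\HS_{S/I}=B$. For $s\ge3$, assume $g_1,\dots,g_{s-1}$ have been chosen with $I'=(g_1^{k_1},\dots,g_{s-1}^{k_{s-1}})$ Hilbert generic, and write $B'$ for the corresponding bound. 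The kernel of the surjection $S/I'\to S/I$ is the image of multiplication by $g_s^{k_s}$ on $S/I'$, so $\HF_{S/I}(e)=\HF_{S/I'}(e)-\rk\bigl(\cdot g_s^{k_s}\colon[S/I']_{e-k_sd_s}\to[S/I']_e\bigr)$. Now $S/I'$ is an Artinian quotient of $\bC[x,y]$, hence in characteristic $0$ has the Strong Lefschetz property; so for a Lefschetz element $L$ the map $\cdot L^{k_sd_s}$ has maximal rank on $S/I'$ in every degree. Since $g_s=L^{d_s}$ is a legitimate form of degree $d_s$ with $g_s^{k_s}=L^{k_sd_s}$, and ``maximal rank in every degree'' is an open condition on $g_s\in S_{d_s}$, a generic $g_s$ also makes $\cdot g_s^{k_s}$ of maximal rank on $S/I'$; fix such a $g_s$. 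Then $\rk(\cdot g_s^{k_s})$ equals $\min\bigl(B'(e-k_sd_s),B'(e)\bigr)$ in degree $e$, whence $\HF_{S/I}(e)=\max\bigl(0,\,B'(e)-B'(e-k_sd_s)\bigr)$, and a routine computation with the explicit formula above identifies this with $B(e)$, completing the induction.

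The crux is the inductive step, and within it the use of the Strong Lefschetz property of two-variable Artinian algebras to pin down $\rk(\cdot g_s^{k_s})$ on $S/I'$: had we only needed multiplication by a generic \emph{linear} form, the Weak Lefschetz property---classical in two variables---would be enough, but the $k_s$-th power forces the strong version. The remaining points need only care, not ideas: the openness argument turning $\max$-rank of $\cdot L^{k_sd_s}$ into $\max$-rank of $\cdot g_s^{k_s}$, and the combinatorial identity $\max(0,B'(e)-B'(e-k_sd_s))=B(e)$ for consecutive two-variable Fr\"oberg bounds, which uses the non-truncated form of $B'$. (If one prefers not to quote the Strong Lefschetz property, the required maximal-rank statements can be checked directly for these particular quotients by products of powers of generic linear forms, e.g.\ by an apolarity computation on $\bP^1$.)
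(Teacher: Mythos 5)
Your proposal is correct, but it takes a genuinely different route from the paper. The paper also starts with the openness reduction (quoting \cite{FL} for the fact that Hilbert genericity is a Zariski open condition), but then specializes \emph{all} the generators at once to powers of generic linear forms, $g_i=l_i^{d_i}$, and invokes \cite[Corollary 2.3]{GS}, which says precisely that an ideal generated by powers of generic linear forms in $\bC[x,y]$ is Hilbert generic; the whole proof is three lines. You instead induct on $s$, specialize only the new generator to $L^{d_s}$ for a Lefschetz element $L$, use the Strong Lefschetz property of Artinian quotients of $\bC[x,y]$ in characteristic zero to get maximal rank of $\cdot L^{k_sd_s}$ on $S/I'$, pass back to a generic $g_s$ by semicontinuity, and close with the identity $\max\bigl(0,B'(e)-B'(e-k_sd_s)\bigr)=B(e)$. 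In effect you trade one black box for another: SLP in two variables (Harima--Migliore--Nagel--Watanabe, which needs an explicit reference) instead of the Geramita--Schenck result, the two being close relatives via apolarity on $\bP^1$; the price is the bookkeeping with the two-variable bound, which does check out (concavity of your $\varphi$, truncation occurring before any degree $k_id_i+k_jd_j$, and a short case analysis for the displayed identity, using that $\varphi'$ is non-increasing past the smallest generator degree). One small point to tighten: the coefficientwise bound $\HF_{S/I}(e)\ge B(e)$ for \emph{arbitrary} tuples is not literally what \eqref{FHS} says, since that inequality is lexicographic; in two variables it follows instead from the trivial estimate $\HF_{S/I}(e)\ge \dim_\bC S_e-\sum_i\dim_\bC S_{e-k_id_i}=\varphi(e)$ combined with your observation that $B=\max(0,\varphi)$ --- or you can simply cite \cite{FL} for the openness, as the paper does.
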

\begin{proof}
As we mentioned above, it suffices to prove the statement for a specialization of the $g_i$'s. Assume that they are powers of generic linear forms, i.e., $g_i = l_i^{d_i}$, for $i = 1,\ldots,s$. By \cite[Corollary 2.3]{GS}, the ideal $(l_1^{d_1k_1},\ldots,l_s^{d_sk_s})$ is Hilbert generic and we are done.
\end{proof}
Using Lemma~\ref{lemma:Hilbert gen ideal}, {we are able to settle Conjecture \ref{conj:otta} in the case of binary forms.}
\begin{theorem}\label{th:genkrank}
For $k,d \geq 2$, the generic $k$-rank of binary forms of degree $kd$ is
 $$\rk^\circ_k(2,kd)=\left\lceil\frac{{kd+1}}{d+1}\right\rceil.$$
\end{theorem} 
\begin{proof}
The case $k = 2$ is covered by \cite[Theorem 5]{FOS}. Assume that $k \geq 3$. By Lemma \ref{Terracini} and \eqref{Terracini_codim}, in order to compute the dimension of the $s$-th secant variety of the variety of $k$-th powers, we have to compute the Hilbert function {in degree $dk$} of the ideal $I=(g_1^{k-1},\ldots,g_s^{k-1})$, where the $g_i$'s are generic binary forms of degree $d$. By Lemma \ref{lemma:Hilbert gen ideal}, we know that $I$ is Hilbert generic. Therefore,
$\HS_{S/I}(t)=\left\lceil H(t)\right\rceil,$ where
$$
 H(t) = \frac{(1-t^{d(k-1)})^s}{(1-t)^2} =: \sum_{i\geq 0} H_it^i.
$$
{Since $k\geq 3$, for any $d(k-1) \leq i \leq dk$, we have} $H_i = (i+1) - s(i-d(k-1)+1)$. Then,
$$
 s_i \geq s_{i+1}, \text{ for all } i = d(k-1),\ldots,dk,
$$
{where we denote $s_i := \min\{ s ~|~ H_i \leq 0\}$.}
From this, it follows that, for $s \leq \left\lceil\frac{{kd+1}}{d+1}\right\rceil$, $$\HF_{S/I}(dk) = \max\{0,H_{kd}\} = \max\{0,(kd+1) - s(d+1)\}.$$

In particular, we conclude that $\rk_k^{\circ}(2,kd) = \left\lceil\frac{{kd+1}}{d+1}\right\rceil$. 
\end{proof}
As we mentioned in the introduction, the proof of Theorem \ref{th:genkrank} has been already obtained by different methods in \cite[Theorem 1.8]{Re1}. However, the approach used here is much more general and can extend to higher dimensions. We explain this idea in Appendix \ref{app:Froberg} where we show that a generalized version of Fr\"oberg's Conjecture implies Conjecture \ref{conj:otta} on generic $k$-ranks in any number of variables. In particular, we use this to settle the case of sums of squares in three variables, see Theorem \ref{thm:appendix_small_secants}.

\section{Maximal $3$-rank of binary sextics}
In this section {we settle Conjecture \ref{conj:maxkrank} in the case of sum of {cubes} decompositions of binary sextics.} 

\begin{theorem} Every binary sextic can be written as a sum of at most three cubes of binary quadratic forms.

\end{theorem}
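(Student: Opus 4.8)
The plan is to deduce Theorem~\ref{th:sextic} from Theorem~\ref{th:genkrank} together with a boundary analysis, which I translate to the classical side by identifying sums of cubes of binary quadratics with Waring decompositions of ternary cubics.

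By Theorem~\ref{th:genkrank} with $k=3$, $d=2$ the generic $3$-rank of binary sextics is $\lceil 7/3\rceil=3$, so $\sigma_3\bigl(\VP_{2,6}^{(3)}\bigr)=\bP(S_6)$ and every binary sextic has \emph{border} $3$-rank at most $3$; equivalently, the locus of binary sextics of $3$-rank $\geq 4$ is a proper Zariski-closed subset of $\bP(S_6)$. The content of Theorem~\ref{th:sextic} is that this locus is empty, i.e.\ that for binary sextics the honest $3$-rank never exceeds the border $3$-rank --- a statement that typically fails (the tangential variety being the usual culprit), so one must use the geometry at hand. I would do so via the $2$-uple embedding $\bP^1\hookrightarrow\bP^2$, $[x:y]\mapsto[x^2:xy:y^2]$, with image the smooth conic $C=\{uw=v^2\}$. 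The induced isomorphism $S_2\xrightarrow{\,\sim\,}\bC[u,v,w]_1$ (sending $x^2,xy,y^2$ to $u,v,w$) carries a quadratic $q$ to a linear form $\ell_q$ and $q^3$ to $\ell_q^3$, while the restriction $\bC[u,v,w]_3\twoheadrightarrow S_6$, $G\mapsto G(x^2,xy,y^2)$, is surjective with kernel $(uw-v^2)\cdot\bC[u,v,w]_1$. Hence a sextic $f=G(x^2,xy,y^2)$ can be written as $q_1^3+q_2^3+q_3^3$ \emph{if and only if} some $m\in\bC[u,v,w]_1$ makes the ternary cubic $G+(uw-v^2)m$ have honest Waring rank $\leq 3$. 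As $m$ varies these cubics fill $\Pi\setminus\Lambda$, where $\Lambda\cong\bP^2$ is the plane of ``conic $+$ line'' cubics $(uw-v^2)\cdot\bC[u,v,w]_1$ and $\Pi=\langle [G],\Lambda\rangle\cong\bP^3$; moreover, for $f\neq 0$ none of these cubics lies on $\Lambda$. So Theorem~\ref{th:sextic} is equivalent to: for every such $\Pi$, the set $(\Pi\setminus\Lambda)\cap\{\,\text{cubics of Waring rank}\leq 3\,\}$ is non-empty.

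Now I would use that $\sigma_3(\mathrm{Ver}_{3,3})\subset\bP^9$ is the Aronhold quartic hypersurface, and that by the classical classification of the Waring ranks of ternary cubics the cubics of border rank $\leq 3$ and honest rank $\geq 4$ form a subset $B$ contained in the closure $\overline{O}$ of the $\mathrm{PGL}_3$-orbit of a cuspidal cubic --- a $7$-dimensional proper subvariety of $\sigma_3(\mathrm{Ver}_{3,3})$ (it consists of the cuspidal cubics, of rank $4$, and the ``conic $+$ tangent line'' cubics, of rank $5$, together with their degenerations). We also know that $(\Pi\setminus\Lambda)\cap\sigma_3(\mathrm{Ver}_{3,3})$ is non-empty --- this is Theorem~\ref{th:genkrank} again, every sextic having border $3$-rank $\leq 3$ --- and, being a dense open subset of the hypersurface section $\Pi\cap\sigma_3(\mathrm{Ver}_{3,3})$ (minus the curve $\Lambda\cap\sigma_3(\mathrm{Ver}_{3,3})$), it has dimension $\geq 2$. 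Since a point of $\bigl((\Pi\setminus\Lambda)\cap\sigma_3(\mathrm{Ver}_{3,3})\bigr)\setminus\overline{O}$ is exactly a cubic $G+(uw-v^2)m$ of Waring rank $\leq 3$, giving the sought decomposition of $f$ after restriction to $C$, the theorem reduces to the single assertion: $(\Pi\setminus\Lambda)\cap\sigma_3(\mathrm{Ver}_{3,3})\not\subseteq\overline{O}$.

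I expect this last step to be the heart of the matter, since it must hold uniformly in $G$. For generic $G$ it is immediate by dimension, as $\dim(\overline{O}\cap\Pi)\leq 3+7-9=1$, strictly less than $\dim\bigl((\Pi\setminus\Lambda)\cap\sigma_3(\mathrm{Ver}_{3,3})\bigr)$. The remaining case is a proper closed family of special $G$ --- special positions of the $3$-plane $\Pi$ relative to $\overline{O}$ --- for which I would argue by the degeneration type of $f$: when two of the three summands collide one reaches the tangential locus, i.e.\ a sextic of the shape $q^2h$ with $q,h\in S_2$, which corresponds under the dictionary above to a ``double line $+$ line'' cubic $\ell_q^2\ell_h$, and every such cubic has Waring rank exactly $3$, hence lies outside $\overline{O}$ and is good; the deeper degenerations are handled similarly, reducing at worst to explicit Sylvester-type identities in $x$ and $y$. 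Assembling these finitely many cases so as to cover every special $\Pi$ --- thereby excluding the pathological possibility that all of $(\Pi\setminus\Lambda)\cap\sigma_3(\mathrm{Ver}_{3,3})$ is cuspidal-type --- is the main technical obstacle.
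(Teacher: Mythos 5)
Your reduction is set up correctly and mirrors, in the special case $n=2$, $d=2$, $k=3$, the projection framework the paper develops in Section~\ref{sec: computing ranks} (Lemma~\ref{lemma:rank_fiber}): a sextic $f=G(x^2,xy,y^2)$ is a sum of three cubes of quadratics iff some cubic $G+(uw-v^2)m$ in the fiber has honest Waring rank $\le 3$, and the classification of ranks of ternary cubics correctly identifies the only obstruction as the locus $B\subset\sigma_3(\mathrm{Ver}_{3,3})$ of border-rank-$3$ cubics of rank $\ge 4$ (cuspidal cubics and conic-plus-tangent-line, inside the $7$-dimensional orbit closure $\overline{O}$). But the proof has a genuine gap exactly at its crux: the assertion that for \emph{every} $3$-plane $\Pi=\langle[G],\Lambda\rangle$ the surface $(\Pi\setminus\Lambda)\cap\sigma_3$ is not contained in $\overline{O}$ is never established. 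Your dimension count $\dim(\overline{O}\cap\Pi)\le 3+7-9=1$ is only valid for a \emph{generic} $3$-plane, and $\Pi$ is far from generic --- it always contains the fixed plane $\Lambda$ (and hence always meets $\overline{O}$ along at least the curve of conic-plus-tangent cubics); moreover the generic case is not new information, since Theorem~\ref{th:genkrank} already gives $3$-rank exactly $3$ on a dense open set of sextics. The content of Theorem~\ref{th:sextic} lies entirely in the special $\Pi$, and there your argument becomes a sketch: the observation that sextics of the form $q^2h$ lift to $\ell_q^2\ell_h$ (rank $\le 3$) handles one particular shape of $f$, but you give no classification of which $f$ (equivalently which $\Pi$) are special, and ``the deeper degenerations are handled similarly'' plus your own admission that assembling the cases ``is the main technical obstacle'' is precisely the part that would have to be a proof. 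As it stands you have reformulated the problem and verified it generically, not proved it.

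For comparison, the paper's proof is entirely different and elementary-constructive: after normalizing $a_0\neq 0$ it completes the cube with an explicit quadratic $q$ so that $p-a_0q^3=a_0^{-5}y^3c(x,y)$ for a binary cubic $c$; if the discriminant $\Delta(c)\neq 0$ Sylvester writes $c$ as two cubes of linear forms and one is done, while the degenerate locus $\Delta(c)=0$ is killed by a shear $p_T(x,y)=p(x,Tx+y)$ together with explicit lowest-order-term computations showing $D(p^{(j)}_T)$ is not identically zero in $T$, so a good $T$ always exists. If you want to complete your route, the missing step is a uniform argument (valid for all $\Pi\supset\Lambda$) excluding $(\Pi\setminus\Lambda)\cap\sigma_3\subseteq\overline{O}$; some substitute for the paper's discriminant-plus-shear analysis, or a direct case analysis of all sextics whose entire fiber could consist of cuspidal-type cubics, would be required.
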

\begin{proof}
We begin with an observation about binary cubics.
It has been known since the work of Sylvester that a binary cubic $h$
can be written as sum of two cubes of linear forms unless $h$ has a
square factor, and is 
not a cube; see \cite[{Theorem} 5.2]{Re1}. 

If $h$ is a cube,
it is trivially the sum of two cubes. If $h$ does not have a square
factor, then after a change of variables, $h(x,y) = xy(\al x + \be y)$
with $\al\be \neq 0$.  In this case, letting  $\om = e^{\frac{2\pi
    i}3}$, we have the identity
\[
3 \al \be (\om - \om^2) xy(\al x + \be y)=  (\om^2 \al x- \om \be y)^3
- (\om \al 
x - \om^2 \be  y)^3.
\]
Otherwise, after a change of variables,  $h(x,y) = x^2y$ and
\[
6x^2y = (-x+y)^3 - 2y^3 + (x+y)^3.
\]
Thus, in every case, a binary cubic is a sum of at most three cubes of
linear forms. {Sylvester's} algorithm allows
one to write a cubic as a sum of cubes without factoring it, {see Example \ref{ex: sylvester}.}

Consider now the general binary sextic
\[
p(x,y) = \sum_{k=0}^6 \binom 6k a_k x^{6-k}y^k 
, \quad a_k \in \mathbb C.
\]
If $p=0$, then there is nothing to prove. Otherwise, 
we may make an invertible linear change of
variables, after which $p(1,0)  = a_0\neq 0$. 
{If we set
\[
\begin{gathered}
q(x,y) = x^2 + \left( \frac{2a_1}{a_0}\right) xy +
\left(\frac{5a_0a_2-4a_1^2}{a_0^2}\right) y^2,
\end{gathered}
\]
then,} we have the expression
\begin{equation}\label{E:1}
p(x,y) = a_0q(x,y)^3 + \frac 1{a_0^5} y^3c(x,y),
\end{equation}
where
$c(x,y) = c_0 x^3 + 3 c_1 x^2y + 3c_2 xy^2 + c_3y^3$, with
\begin{align*}
c_0 &= 20a_0^3(2a_1^3 - 3a_0a_1a_3+a_0^2 a_3), \\
c_1 &= 5a_0^2(4 a_1^2 a_2 - 5 a_0 a_2^2 + a_0^2 a_4), \\
c_2 &= 2a_0(-16 a_1^5 + 40 a_0 a_1^3 a_2 - 25 a_0^2 a_1 a_2^2 + a_0^4
a_5),\\
c_3 &= 64 a_1^6 - 240 a_0 a_1^4 a_2 + 300 a_0^2 a_1^2 a_2^2 - 125 a_0^3 a_2^3 + 
 a_0^5 a_6.
\end{align*}

If the discriminant {$\Delta(c)$ of $c$  is {non-vanishing},} then $c(x,y)$ has distinct
factors and, by Sylvester, it is a sum of two cubes of linear forms.
Thus \eqref{E:1} gives $p$  as a sum of three
cubes of quadratic forms. A computation shows that 
{
\[
\Delta(c) = 
 -540a_0^6D(p),
\]
}
where $D(p)$ is a polynomial in the $a_k$'s of degree 18 with 128 terms.  It follows that $p$
 is a sum of three cubes of linear forms unless $D(p) = 0$. This was known for general sextics by \cite[{Corollary} 4.2]{Re2}.

Suppose $D(p) = 0$. If $c$ is the cube of a linear form, then
\eqref{E:1} gives $p$ as a sum of two cubes. In the remaining case,
$c$ is a cubic with a square factor. As a first step, we
may rewrite $p$ as 
\[\label{E:2}
p(x,y) = (ax^2 + 2b x y + c y^2)^3 + y^3(r x + s y)^2(t x + u y),
\]
where $ru - st \neq 0$. 
If $a = 0$, then $p(x,y) = y^3h(x,y)$ for a cubic $h$, and (as noted earlier)
 $h$ must be a sum of three cubes of linear forms, so $p$ is then a
 sum of three cubes. We may therefore assume 
that $a \neq 0$ and distinguish two cases.

First, suppose $r=0$ in \eqref{E:2}. Then $st \neq 0$. We may scale $y$ so that
$s=1$. Then, since $t\neq 0$, we make a further invertible change of variables
$(tx+uy,x) \mapsto (x,y)$. This case then reduces to 
\[
p^{(1)}(x,y) = (ax^2 + 2b x y + c y^2)^3 + xy^5.
\]
The same argument as before shows that we may assume $a\neq 0$ for $p^{(1)}$.

Otherwise, $r \neq 0$, so we make the change of variables $(rx +
sy,y) \mapsto (x.y)$, so that $y^3(r x + s y)^2(t x + u y) \mapsto
y^3x^2(tx + uy)$, where $u \neq 0$, and we may again scale $y$ so that
$u=1$. This gives the second case
\[
p^{(2)}(x,y) =  (ax^2 + 2b x y + c y^2)^3 + x^2y^3(t x + y).
\]

For either case $p^{(j)}$ and $T \in \mathbb C$, let $p^{(j)}_T(x,y) =
p^{(j)}(x,Tx + y)$, so 
$p^{(j)}(x,y) = p^{(j)}_T(x, -Tx+y)$. It will suffice to write
$p^{(j)}_T$ as a sum of three cubes. Write 
\[
p^{(j)}_T(x,y) = \sum_{k=0}^6 \binom 6k a_k(T)x^{6-k}y^k.
\]
Here, $a_k(T)$ is a polynomial in $T$ of degree $6-k$.  There are at
most 6 values of $T$ which must be avoided to ensure that $a_0(T) \neq 0$. 

We now compute $D(p^{(j)}_T)$, with the intent of finding a
value of $T$ for which $D(p_T)\neq 0$ . It turns out that $D(p^{(j)}_T)$ is a
massive polynomial of degree 72 in the coefficients of $p^{(j)}$. If
this polynomial 
is non-zero, then except for 
at most $6+72$ values of $T$, $p_T$ (and hence $p$) is a sum of three
cubes. Thus, our situation reduces to deriving a contradiction from
the assumption that $D(p^{(j)}_T)$ is the zero polynomial. (For
computational reasons, this is why we reduced to $p^{(j)}$
above.)    This is easiest to see by
considering the lowest order term in various special cases.

A computation shows that the lowest order term of  $D(p^{(1)}_T)$ 
 is $\frac 1{36} a^{42} T^2$. Since $a \neq 0$, it follows that this
 is not the zero polynomial and so for all but finitely many values of
 $T$, $p^{(1)}_T$ is a sum of three cubes. 

The computation of $D(p^{(2)}_T)$ is  trickier, and we divide
it into four cases. If $c,t \neq 0$, the lowest order term of
$D(p^{(2)}_T)$ is  $-\frac 1{45} a^{40}c^2t T$, and since  $a,c,t \neq
0$, this term is not zero. If $t=0, c\neq 0$, the lowest order term is
$-\frac 2{45} a^{40}c^2 T^2 \neq 0$.
If $c=0,t \neq 0$, the lowest order term is $ -\frac 1{135}
a^{36}t^3 T^3 \neq 0$. If $t=c=0$, the lowest order term of
$D(T)$ is  $-\frac 8{135}
a^6 T^6 \neq 0$. This completes the proof.
\end{proof}

\begin{example} In practice, this algorithm leads to some nasty
  expressions. We start with a specially-cooked simple one. Suppose
\[
p(x,y) = x^6 + 3 x^5 y - 3 x^4 y^2 - 11 x^3 y^3 + 9 x^2 y^4 + 21 x y^5 - y^6.
\]
Then, 
\[
p(x,y) = (x^2 + x y - 2y^2)^3 + y^3(3x^2y + 9xy^2 + 7 y^3).
\]
Applying Sylvester's algorithm to $h(x,y) = 3x^2y + 9xy^2 + 7 y^3$ = $0\cdot
y^3 + 1 \cdot 3x^2y + 3 \cdot 3xy^2 + 7 \cdot y^3$, we see that 
\[
\begin{pmatrix}
0 & 1 & 3\\
1 & 3 & 7\\ 
\end{pmatrix}
\cdot
\begin{pmatrix}
2 \\ -3 \\ 1
\end{pmatrix}
=\begin{pmatrix}
0\\0
\end{pmatrix} 
\]
and $2x^2 - 3xy + y^2 = (x - y)(2x - y)$, so there exist $\la_k$ so
that $h(x,y) = \la_1 (x + y)^3 + \la_2(x + 2y)^3$. Indeed, $\la_1 =
-1$ and $\la_2 =1$ and
\[
p(x,y) = (x^2 + x y - 2y^2)^3 + y^3(x + 2 y)^3 - y^3 (x + y)^3.
\]
\end{example}
\begin{example}
For a less trivial example, 
suppose $p(x,y) =  x^6 + x^5y+x^4y^2+x^3y^3+x^2y^4$ $+xy^5+y^6$.
Then
\begin{equation*}
\begin{gathered}
p(x,y) - \left(x^2 + \tfrac 13 x y + \tfrac 29 y^2 \right)^3 = \\
\frac 7{729} y^3 (54 x^3 + 81 x^2 y + 99 x y^2 + 103 y^3).
\end{gathered}
\end{equation*}
An application of Sylvester's algorithm shows that 
\begin{equation*}
\begin{gathered}
54 x^3 + 81 x^2 y + 99 x y^2 + 103 y^3 = \\
m_1 (78 x + (173 - \sqrt{20153}) y)^3 +  
 m_2 (78 x + (173 + \sqrt{20153}) y)^3, \\
m_1 = \frac{20153 + 134 \sqrt{20153}}{354209128},\qquad m_2 =  \frac{20153 -
  134 \sqrt{20153}}{354209128} 
\end{gathered}
\end{equation*}
This gives a simple sextic $p$ as a sum of three cubes in an ugly way
and gives no hint about the existence of the formula 
\[
p(x,y) = \sum_{\pm} \left( \tfrac{9 \pm \sqrt{-3}}{18}\right)(x^2 +
\tfrac{1\pm\sqrt{-3}}2 xy + y^2)^3. 
\]
\end{example}
\begin{example}
For another example, set
\[
p(x,y) = x^6 + 3x^5 y + y^6.
\]
We have 
\[
p(x,y) - (x^2 + x y - y^2)^3 = y^3(5x^3 - 3x y^2 + 2y^3)
\]
and yet another application of Sylvester's algorithm gives
\[
5x^3 - 3x y^2 + 2y^3 = \tfrac{20-9\sqrt{5}}{20}((-5-2\sqrt 5)x + y)^3 +
\tfrac{20+9\sqrt{5}}{20}((-5+2\sqrt 5)x + y)^3.
\]

On the other hand, if $p(x,y) = x^6 + 3x y^5 + y^6$,
then the first step of the algorithm leaves us with $c(x,y) = y^2(3x+y)$,
and we must invoke $p_T$. A computation shows that $D(T)$ equals $T^2$
times a polynomial which is irreducible over $\mathbb Q$. We take
$T=-1$, and
\[
p_{-1}(x,y) =  x^6 + 3x (-x+y)^5 + (-x+y)^6 = -x^6 + 9 x^5 y - 15 x^4
y^2 + 10 x^3 y^3 - 3 x y^5 + y^6.
\]
Following the algorithm,
\[
p_1(x,y) + (x^2 - 3 x y - 4y^2)^3 = y^3(55 x^3 - 60 x^2 y - 147 x y^2 - 63 y^3)
\]
and a further invocation of Sylvester, followed by the reversed change
of variables yields 
\[
\begin{gathered}
x^6 + 3 x y^5 + y^6 =\\ (6 x^2 + 11 x y + 4 y^2)^3 + \la_+(\al_+ x^2 +
\be x y +  y^2)^3 +  \la_-(\al_- x^2 +
\be x y +  y^2)^3  \\
 \la_{\pm} = \frac{4445 \pm \sqrt{5632445}}{2282}, \quad
  \al_{\pm} = \frac{6727 \pm \sqrt{5632445}}{2282}, \be_{\pm} =
  \frac{9009 \pm \sqrt{5632445}}{326}.
\end{gathered}
\]
\end{example}
An alternative approach is to observe that for a sextic $p$, there is
usually a quadratic $q$ so that $p - q^3$ is even{, i.e., is a cubic in $\{x^2,y^2\}$}. It is enough to look at the
coefficients of $x^5y, x^3y^3, xy^5$ and solve the equations for the
coefficients of $q$. So, it is usually a sum of two cubes of even quadratic forms. If this doesn't
work, apply it to $p_T$.


\section{Computing $k$-ranks}\label{sec: computing ranks}
In this section, we propose a {procedure} to compute $k$-th Waring decompositions based on a geometric description of the varieties of powers as an explicit linear projection of Veronese varieties. 

The variety of powers $\VP_{n,kd}^{(k)}$ is the image of the regular map
$$
 \nu_{n,kd}^{(k)} : \bP(S_d) \rightarrow \bP(S_{kd}),~~ [g] \mapsto [g^k].
$$
Denote by $B_i$ the set of multi-indices $\al = (\al_1,\ldots,\al_n) \in \bN^n$ and $|\al| = \al_1 + \ldots + \al_n = d$. 

{Let $T$ be the coordinate ring of $\bP(S_d)$ whose variables are labelled by $B_d$, i.e., $T = \bC[Y_\al ~:~ \al \in B_d]$. In particular, if $S = \bC[x_1,\ldots,x_n]$, we identify $T_1$ with $S_d$ by setting $Y_\al = x^\al := x_1^{\al_1}\cdots x_n^{\al_n}$.} Given $g \in S_d$, we denote by $l_{g}$ the corresponding linear form in $T_1$. Thus, we consider the usual Veronese embedding
$$
 \nu_{N,k} : \bP(T_1) \rightarrow \bP(T_{k}),~~ [l] \mapsto [l^k],
$$
whose image is the Veronese variety ${\rm Ver}_{N,k}$, where $N = {d+n-1 \choose n-1} - 1$.

The substitution $Y_\al \mapsto x^\al$ gives a linear projection 
$
 \pi_{n,kd}^{(k)} : \bP(T_k) \rightarrow \bP(S_{kd})
$
and we get the diagram 
$$
\xymatrix{
\bP(T_1)\ni [l_g] \ar[rr]^{\nu_{N,k}} \ar@{=}[d] & & [l_g^k]\in {\rm Ver}_{N,k}\subset\bP(T_k) \ar@<1ex>@{-->}[d]^{\pi_{n,kd}^{(k)} }\\
\bP(S_d)\ni [g] \ar[rr]^{\nu_{n,kd}^{(k)}}& &  [g^k]\in \VP_{n,d}^{(k)}\subset\bP(S_{kd}) \\
}
$$
The center of the projection $\pi_{n,kd}^{(k)} $ has an explicit interpretation in terms of the ideal defining a specific Veronese variety. Recall that Veronese varieties are determinantal, see \cite{Pu98}. In particular, the Veronese variety ${\rm Ver}_{n,d}\subset \bP(S_{d})$ is generated by the $2\times 2$ minors of the {\it $i$-th catalecticant matrix}
$$
 {\rm cat}_i(n,d) = \left(Y_{\be_1+\be_2}\right)_{\be_1,\be_2} \in {\rm Mat}^{B_i \times B_{d-i}}, \text{ for any } i = 1,\ldots,d-1,
$$
where $Y_{\al}$ are coordinates of the space $\bP(S_d)$ corresponding to the standard monomial basis of $S_d$.
\begin{lemma}\label{lemma:center}
 In the same notation as above, the center of the linear projection $\pi_{n,kd}^{(k)}$ is
 $$
  E = \bP([I_{n,d}]_k),
 $$
 where $[I_{n,d}]_k$ is the $k$-th homogeneous part of the ideal $I_{n,d} \subset T$ defining the Veronese variety ${\rm Ver}_{n,d}$.
\end{lemma}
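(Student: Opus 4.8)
The plan is to realize $\pi_{n,kd}^{(k)}$ as the projectivization of an explicit graded linear map and to read off its center as the projectivization of the kernel. First I would introduce the substitution homomorphism $\psi\colon T\to S$ determined by $Y_\al\mapsto x^\al$. This is a morphism of graded rings carrying $T_1=\langle Y_\al:\al\in B_d\rangle$ onto $S_d$, hence $T_k$ into $S_{kd}$, with image the $d$-th Veronese subalgebra $S^{(d)}=\bigoplus_{j\ge 0}S_{jd}$; moreover $\psi_k\colon T_k\to S_{kd}$ is onto, since any monomial $x^\be$ with $|\be|=kd$ splits as $x^{\al_1}\cdots x^{\al_k}$ with $\al_1,\dots,\al_k\in B_d$ and hence equals $\psi(Y_{\al_1}\cdots Y_{\al_k})$. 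By definition $\pi_{n,kd}^{(k)}$ is the rational map $\bP(T_k)\dashrightarrow\bP(S_{kd})$ induced by $\psi_k$, so its center (the indeterminacy locus) is exactly $\bP(\ker\psi_k)$, and since $\psi$ is homogeneous $\ker\psi_k=(\ker\psi)\cap T_k=[\ker\psi]_k$. Thus everything reduces to the identity $\ker\psi=I_{n,d}$.

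This identity is the crux. I would prove it by showing that $\ker\psi$ is the homogeneous vanishing ideal of the Veronese variety ${\rm Ver}_{n,d}\subset\bP(S_d)=\bP(T_1)$. For $F\in T_m$ the polynomial $\psi(F)\in S_{md}$ is by construction $\psi(F)(x)=F\big((x^\al)_{\al\in B_d}\big)$, so $\psi(F)=0$ if and only if $F$ vanishes at the point $\big((a^\al)_{\al\in B_d}\big)$ for every $a\in\bC^n\setminus\{0\}$, i.e. at every point of ${\rm Ver}_{n,d}$; hence $\ker\psi=I({\rm Ver}_{n,d})=I_{n,d}$. (Equivalently: $T/\ker\psi\cong S^{(d)}$ is a domain, so $\operatorname{Proj}(T/\ker\psi)$ is the Veronese variety and $\ker\psi$ is exactly its prime, saturated defining ideal.) This also matches the catalecticant description: every $2\times2$ minor $Y_{\ga+\de}Y_{\ga'+\de'}-Y_{\ga+\de'}Y_{\ga'+\de}$ of ${\rm cat}_i(n,d)$ maps under $\psi$ to $x^{\ga+\ga'+\de+\de'}-x^{\ga+\ga'+\de+\de'}=0$, hence lies in $\ker\psi$, and by \cite{Pu98} these minors generate $I_{n,d}$. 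Combining with the first paragraph yields $E=\bP(\ker\psi_k)=\bP([I_{n,d}]_k)$.

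The one point to watch, more bookkeeping than obstacle, is the normalization built into the identification $T_1\leftrightarrow S_d$: for $\ell=\sum a_ix_i$ one has $\ell^d=\sum_{|\al|=d}\binom{d}{\al}a^\al x^\al$, so the honest monomial coordinates of $[\ell^d]\in\bP(S_d)$ are $\big(\binom{d}{\al}a^\al\big)_\al$, not $(a^\al)_\al$. These differ by the diagonal graded automorphism $Y_\al\mapsto\binom{d}{\al}^{-1}Y_\al$ of $T$, which affects neither the dimensions of graded pieces nor the statement; I would therefore fix once and for all the parametrization $[a]\mapsto[(a^\al)_\al]$ of ${\rm Ver}_{n,d}$ (equivalently, use the divided-power basis of $S_d$ to label the coordinates $Y_\al$) so that $\psi$ becomes literally the coordinate-ring surjection of the Veronese and $\ker\psi=I_{n,d}$ holds on the nose. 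With that convention the argument above goes through unchanged.
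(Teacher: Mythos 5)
Your argument is correct and is essentially the paper's own proof: the center is the projectivization of the kernel of the degree-$k$ substitution map $Y_\al \mapsto x^\al$, and that kernel is exactly the space of degree-$k$ forms vanishing on ${\rm Ver}_{n,d}$, i.e.\ $[I_{n,d}]_k$. The additional details you supply (surjectivity of the substitution in degree $k$, Pucci's result that the catalecticant minors generate the full vanishing ideal, and the multinomial-coefficient normalization in the identification $T_1 \cong S_d$) simply make explicit what the paper leaves implicit.
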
 
\begin{proof}
 The center of the projection is given by the linear space of homogeneous polynomials $f\in T_k$ annihilated by the substitution $Y_{\alpha} \mapsto x^\alpha$, for any $\alpha \in B_d$. This is, by definition, the set of forms of degree $k$ vanishing on the Veronese variety ${\rm Ver}_{n,d}$.
\end{proof}
It is easy to see that the Veronese variety ${\rm Ver}_{N,k}\subset \bP(T_k)$ does not intersect the center of the projection. Therefore, the projection $\pi_{n,kd}^{(k)}$ is a regular map {restricted to} ${\rm Ver}_{N,k}$ and it maps the Veronese variety onto the variety of powers $\VP^{(k)}_{n,d}$. Hence, by \cite[Theorem 7, Section I.5.3]{Sh94}, we have that the restriction $\pi_{n,kd}^{(k)}  : {\rm Ver}_{N,k} \rightarrow \VP^{(k)}_{n,d}$ is a finite surjective map.

{Now, to find a $k$-th Waring decomposition {of} a given form $f$ of degree $kd$, we want to project {using} $\pi_{n,kd}^{(k)}$ a classical Waring decomposition in $T_k$ of some element in the fiber over $f$.} A similar idea has been also used in \cite{BB}.

Given $f \in S_{kd}$, we denote by $\cF_f$ the fiber $(\pi_{n,kd}^{(k)})^{-1}([f])$. In particular, if $f_0\in T_k$ is any element such that $\pi_{n,kd}^{(k)}([f_0]) = [f]$, then, we have $\cF_{f} = \left\langle \{[f_0]\} \cup E\right\rangle\subset \bP(T_k)$.
\begin{lemma}\label{lemma:rank_fiber}
Let $f\in S_{kd}$. Then, 
 $$
  \rk_k(f) = \min\left\{\rk(g) ~|~ [g] \in \cF_f\smallsetminus E\subset \bP(T_k)\right\}.
 $$
\end{lemma}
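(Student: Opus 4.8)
The plan is to deduce the equality from a single structural fact: the linear projection $\pi_{n,kd}^{(k)}$ is (the degree-$k$ part of) a \emph{ring} homomorphism, so it carries $k$-th powers to $k$-th powers and hence transports Waring-type decompositions in both directions.

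Concretely, I would let $\phi\colon T \to S$ be the $\bC$-algebra homomorphism determined by $Y_\al \mapsto x^\al$ for $\al \in B_d$; its degree-$k$ component is the linear map $\phi_k \colon T_k \to S_{kd}$ whose kernel is $[I_{n,d}]_k$ by Lemma~\ref{lemma:center}, so that $E = \bP(\ker\phi_k)$ and $\pi_{n,kd}^{(k)}([g]) = [\phi_k(g)]$ whenever $[g]\notin E$; thus $\cF_f\setminus E = \{[g]\in\bP(T_k) : \phi_k(g) = cf \text{ for some } c\neq 0\}$. The only computation I need is that, for a linear form $l=\sum_\al c_\al Y_\al \in T_1$ and the corresponding $g=\sum_\al c_\al x^\al \in S_d$ under the identification $T_1\cong S_d$, multiplicativity of $\phi$ gives $\phi_k(l^k)=\phi(l)^k=g^k$.

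For $\rk_k(f)\ge\min\{\cdots\}$: take an optimal decomposition $f = g_1^k+\cdots+g_s^k$ with $s=\rk_k(f)$ and $g_i\in S_d$, let $l_i\in T_1$ correspond to $g_i$, and set $g := l_1^k+\cdots+l_s^k \in T_k$. Then $\phi_k(g)=\sum_i g_i^k=f\neq 0$, so $[g]\in\cF_f\setminus E$ and $\rk(g)\le s$, giving the bound. For the reverse inequality, take $[g]\in\cF_f\setminus E$ with $\rk(g)=r$ minimal; then $\phi_k(g)=cf$ with $c\neq 0$, and after rescaling $g$ by an appropriate constant (which preserves Waring rank, since every nonzero scalar is a $k$-th power in $\bC$) we may assume $\phi_k(g)=f$. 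Writing $g=l_1^k+\cdots+l_r^k$ with $l_i\in T_1$ and letting $g_i\in S_d$ correspond to $l_i$, the computation above yields $f=\phi_k(g)=\sum_i g_i^k$, whence $\rk_k(f)\le r$. Combining the two inequalities completes the proof; note that $f\neq 0$ is implicit in the statement, since $\cF_f$ is a fiber of a rational map between projective spaces.

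I do not anticipate a genuine obstacle here: the argument is essentially bookkeeping. The only points deserving care are that $\phi$ being a ring homomorphism really does commute with taking $k$-th powers, so that the identifications $T_1\cong S_d$ and $\phi_k\colon T_k\to S_{kd}$ are compatible on powers, and the harmless rescaling replacing $\phi_k(g)=cf$ by $\phi_k(g)=f$.
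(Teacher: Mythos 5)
Your proposal is correct and follows essentially the same argument as the paper: push a Waring decomposition of a fiber element down through the substitution $Y_\al \mapsto x^\al$ for one inequality, and lift a minimal $k$-th Waring decomposition of $f$ to the linear forms $l_{g_i}\in T_1$ for the other. Your explicit handling of the projective scalar (replacing $\phi_k(g)=cf$ by $\phi_k(g)=f$ via rescaling) is a point the paper's proof glosses over, but it is the same route, just with tidier bookkeeping.
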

\begin{proof}
 Given a Waring decomposition $g = \sum_{i=1}^r l_i^k$ of an element $g \in \cF_f\smallsetminus E$, by substituting the $Y$'s with the corresponding monomials in the $x$'s, we obtain a Waring decomposition of $f$ as sum of $k$-th powers. Therefore, $\rk_k(f) \leq \min\left\{\rk(g) ~|~ [g] \in \cF_f\smallsetminus E\right\}$.
 
Conversely, let $f = \sum_{i=1}^r g_i^k$ be a minimal $k$-th Waring decomposition {representing} $f$, where $g_i = \sum_{|\alpha| = d/k} c_{i,\alpha} x^\alpha$, for $i = 1,\ldots,r$. Set $l_{g_i} := \sum_{|\alpha| = d/k} c_{i,\alpha} Y_\alpha \in T_1$. Then, $\sum_{i=1}^r l_{g_i}^k$ is a sum of $k$-th powers of linear forms of an element in $\cF_f\smallsetminus E$ since its image with respect to the linear projection coincides with $[f]$. Therefore, $\rk_k(f) \geq \min\left\{\rk(g) ~|~ [g] \in \cF_f\smallsetminus E\right\}$. 
\end{proof}

Hence, we reduce our problem of computing a $k$-th Waring decomposition of $f$ to the problem of minimizing the classical Waring rank in $\cF_f\setminus E$. Several algorithms for computations of Waring ranks and Waring decompositions have been proposed in the literature. Already, J.J.~Sylvester \cite{Syl1, Syl2} considered the case of binary forms, see \cite{CS} for a modern exposition. Other solutions can be found, for example, in \cite{BCMT10,BGI11,OO13}, but they all work efficiently only under some additional constraints on the considered polynomial. {However, due to Lemma \ref{lemma:rank_fiber}, we {can also use these algorithms to study} $k$-th Waring ranks.}

As an illustration, we apply Sylvester's {\it catalecticant method} to compute $k$-th ranks of binary forms.

\subsection{Sylvester's catalecticant method.}
\noindent {Let us recall the basic notions of Apolarity Theory.}

Let $S = \bC[x_1,\ldots,x_n]$ and $R = \bC[y_1,\ldots,y_n]$ be standard graded polynomial rings. For $i \leq 0$, we set $S_i$ and $R_i$ to be $0$. 
We consider the {\it apolar action} $\circ$ where polynomials in $R$ act over $S$ as partial differentials and, for any polynomial $f \in S_d$, we define the {\it apolar ideal} as $f^\perp = \left\{g\in R ~|~ g\circ f = 0\right\}$. More explicitly, the homogeneous part in degree $i$ of $f^\perp$ is given by the kernel of the {\it $i$-th catalecticant matrix of $f$}
$$
 {\rm cat}_i(f) : R_i \rightarrow S_{d-i},~~  y^\alpha \mapsto \frac{\partial^if}{\partial_{x_1}^{\alpha_1}\cdots \partial_{x_n}^{\alpha_n}}.
$$
{The key lemma which relates classical Waring decompositions to ideals of reduced points is as follows.}
\begin{lemma}[{\rm Apolarity Lemma \cite[Lemma 1.15]{IK}}]\label{lemma:apolarity}
 Let $f \in S$. Then, the following are equivalent:
 \begin{enumerate}
  \item $f$ has a decomposition of length $s$ as sum of powers of linear forms;
  \item $f^\perp$ contains an ideal defining a set of reduced points of cardinality $s$.
 \end{enumerate}
\end{lemma}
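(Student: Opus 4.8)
The plan is to prove both implications by translating between the apolar action and evaluation of dual forms at points, exploiting that the apolar pairing $R_d \times S_d \to \bC$, $(g,f) \mapsto g \circ f$, is a perfect pairing. The computational heart of everything is the single identity that, for a linear form $\ell = a_1 x_1 + \cdots + a_n x_n \in S_1$ and a homogeneous $g \in R_i$ with $i \le d$,
\[
 g \circ \ell^d = \frac{d!}{(d-i)!}\, g(a)\, \ell^{d-i},
\]
where $g(a)$ denotes the evaluation of $g$ at the coefficient vector $a = (a_1,\ldots,a_n)$, while for $i > d$ the left-hand side vanishes identically. I would first check this on monomials $g = y^\alpha$, where it reduces to the elementary computation $\partial^\alpha \ell^d = \frac{d!}{(d-|\alpha|)!} a^\alpha \ell^{d-|\alpha|}$, and then extend by linearity. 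Writing $p = [a_1:\cdots:a_n] \in \bP(S_1)$ and $I_p \subset R$ for the prime ideal of linear forms vanishing at $p$, this identity immediately yields $[(\ell^d)^\perp]_i = [I_p]_i$ for $i \le d$ and $[(\ell^d)^\perp]_i = R_i$ for $i > d$. I would also record the dual-basis relation $y^\alpha \circ x^\beta = \alpha!\,\delta_{\alpha,\beta}$ in degree $d$, which is exactly what makes the pairing perfect.

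For the implication (1) $\Rightarrow$ (2), I would start from a decomposition $f = \sum_{j=1}^s \ell_{p_j}^d$, where after combining proportional summands we may assume the $\ell_{p_j}$ are pairwise non-proportional, so that $Z = \{p_1,\ldots,p_s\}$ is a set of $s$ distinct reduced points with ideal $I_Z = \bigcap_{j} I_{p_j}$. Since both $I_Z$ and $f^\perp$ are homogeneous, to show $I_Z \subseteq f^\perp$ it suffices to test a homogeneous $g \in [I_Z]_i$. If $i \le d$, the identity above gives $g \circ f = \frac{d!}{(d-i)!}\sum_j g(a_j)\,\ell_{p_j}^{d-i} = 0$, because $g(a_j)=0$ for every $j$; if $i > d$ then $g \circ f = 0$ termwise. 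Hence $I_Z \subseteq f^\perp$, which is precisely statement (2).

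The implication (2) $\Rightarrow$ (1) carries the real content, and I would argue by duality inside the perfect pairing restricted to degree $d$. Suppose $f^\perp$ contains the ideal $I_Z$ of a reduced set $Z = \{p_1,\ldots,p_s\}$. Under the pairing, the annihilator in $R_d$ of the span $U = \langle \ell_{p_1}^d,\ldots,\ell_{p_s}^d \rangle \subseteq S_d$ equals $\bigcap_j [(\ell_{p_j}^d)^\perp]_d = \bigcap_j [I_{p_j}]_d = [I_Z]_d$, using the degree-$d$ case of the identity. Because the pairing is perfect, $f \in U$ if and only if the annihilator of $U$ is contained in $[f^\perp]_d$; and the containment $[I_Z]_d \subseteq [f^\perp]_d$ holds by hypothesis. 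Therefore $f = \sum_{j=1}^s c_j \ell_{p_j}^d$ for suitable $c_j \in \bC$, a decomposition of length at most $s$, establishing (1).

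The step I expect to demand the most care is this final duality argument: one must invoke perfectness of the apolar pairing in degree $d$ to convert the inclusion $[I_Z]_d \subseteq [f^\perp]_d$ into the membership $f \in U$, and then keep honest track of the bookkeeping, since the length produced is a priori only $\le s$ (the sharp statement, useful when $Z$ is chosen of minimal cardinality, is the equivalence between length and $\deg Z$). A pleasant feature worth emphasizing is that working through the annihilator $[I_Z]_d$, rather than through the evaluation functionals at the $p_j$ directly, requires no hypothesis that $Z$ impose independent conditions in degree $d$.
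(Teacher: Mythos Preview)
The paper does not supply its own proof of this lemma; it is quoted from \cite[Lemma~1.15]{IK} and used as a black box, so there is no in-paper argument to compare against. Your argument is the standard one (essentially the proof in Iarrobino--Kanev): the identity $g\circ\ell^d=\tfrac{d!}{(d-i)!}g(a)\ell^{d-i}$ followed by the double-annihilator trick under the perfect apolar pairing in degree $d$. It is correct.

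One small wording slip in the direction (1)$\Rightarrow$(2): after ``combining proportional summands'' you may be left with strictly fewer than $s$ distinct points, so you cannot simply declare $Z=\{p_1,\ldots,p_s\}$ to have cardinality $s$. The fix is trivial---enlarge the resulting set to any reduced $Z$ of cardinality exactly $s$ containing the points that actually occur; since adding points only shrinks $I_Z$, the containment $I_Z\subseteq f^\perp$ is preserved. You already flag the symmetric issue (length $\le s$ rather than $=s$) in the reverse direction, so this is just a matter of making the bookkeeping consistent on both sides.
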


\begin{example}[{\it Sylvester's algorithm}]\label{ex: sylvester}
Given a binary form $f$, compute the apolar ideal $f^\perp$. Since any apolar ideal $f^\perp$ is artinian and Gorenstein, in the case of two variables, it is also a complete intersection. In particular, $f^\perp = (g_1,g_2)$ where $\deg(g_1)+\deg(g_2) = \deg(f) + 2$. 

If $g_1$ is square-free, then the ideal $(g_1)$ defines a set of reduced points in $\bP^1$ and then, by Apolarity Lemma, we conclude that the Waring rank of $f$ is equal to $\deg(g_1)$. Otherwise, it is possible to find a square-free element in $f^\perp$ of degree $\deg(g_2)$ and we conclude that the Waring rank of $f$ is $\deg(g_2)$.
\end{example}

{Two examples below illustrate} how we can use this method to study $k$-th Waring ranks.

\begin{example}[Upper bound]
Here, we show how to compute a $k$-th Waring decomposition and obtain upper bounds on $k$-th ranks. Let $f = x^{6} y^{2}-x^{3} y^{5}+x^{2} y^{6}-x y^{7}$ be a binary octic. 

We pick an element in the fiber, for example $f_0 = {Y}_{0}^{3} {Y}_{2}-{Y}_{0} {Y}_{1} {Y}_{2}^{2}+{Y}_{0} {Y}_{2}^{3}-{Y}_{1} {Y}_{2}^{3}$. In this case, we have 
$$
	\left(f_0^\perp\right)_2 = \left\langle Y_1^2, Y_0^2 - 9Y_0Y_1 + 3Y_1Y_2 \right\rangle.
$$
This defines a non-reduced $0$-dimensional scheme, hence, we have that the rank of $f_0$ is at least $5$. Hence, we look for another element in the fiber having rank $4$. 

The general element of the fiber $\cF_f \setminus E$ is
\begin{align*}
 F_c = \lambda f_0 - (Y_{20}Y_{02}-Y_{11}^2) (c_{0}Y_{20}^2 + c_{1}Y_{20}Y_{11} + c_{2}Y_{20}Y_{02} + c_{3}Y_{11}^2 +c_{4}Y_{11}Y_{02} + c_{5}Y_{02}^2),
\end{align*}
{where $\lambda \in \bC$ and $c = (c_0,\ldots,c_5) \in \bC^6$. Since we want $F_c \not\in E$, we may assume $\lambda = 1$.} Now, we consider the apolar action of $\bC[y_{20},y_{11},y_{02}]$ on $\bC[Y_{20},Y_{11},Y_{02}]$, and we get the $2$-nd catalecticant matrix
$${\rm cat}_2(F_c) = \bgroup\begin{pmatrix}0&
       0&
       6 {c}_{0}+6&
       {-4 {c}_{0}}&
       2 {c}_{1}&
       4 {c}_{2}\\
       0&
       {-4 {c}_{0}}&
       2 {c}_{1}&
       {-6 {c}_{1}}&
       -2 {c}_{2}+2 {c}_{3}&
       2 {c}_{4}-2\\
       6 {c}_{0}+6&
       2 {c}_{1}&
       4 {c}_{2}&
       -2 {c}_{2}+2 {c}_{3}&
       2 {c}_{4}-2&
       6 {c}_{5}+6\\
       {-4 {c}_{0}}&
       {-6 {c}_{1}}&
       -2 {c}_{2}+2 {c}_{3}&
       {-24 {c}_{3}}&
       {-6 {c}_{4}}&
       {-4 {c}_{5}}\\
       2 {c}_{1}&
       -2 {c}_{2}+2 {c}_{3}&
       2 {c}_{4}-2&
       {-6 {c}_{4}}&
       {-4 {c}_{5}}&
       {-6}\\
       4 {c}_{2}&
       2 {c}_{4}-2&
       6 {c}_{5}+6&
       {-4 {c}_{5}}&
       {-6}&
       0\\
       \end{pmatrix}\egroup.$$ 
       {Now, we notice that a point vanishing all the $5\times 5$ minors is
      $$\overline{c} = (0,0,0,1,0) \in \bC^6,$$}
      i.e., $F_{\overline{c}} = f - (Y_1^3Y_2-Y_0Y_1Y_2^2) = {Y}_{0}^{3} {Y}_{2}+{Y}_{0} {Y}_{2}^{3}-Y_1^3Y_2-{Y}_{1} {Y}_{2}^{3}$.
      Then, the kernel of ${\rm cat}_2(F_{\overline{c}})$ is given by 
      $$
      \ker ({\rm cat}_2(F_{\overline{c}})) = \langle Q_1,Q_2 \rangle = \left\langle Y_0Y_1, Y_0^2 + Y_1^2 - Y_2^2 \right\rangle.
      $$
      The ideal $I = (Q_1,Q_2)$ defines a set of four points; in particular,
      $$\bX = \{(0:1:1),~(0:1:-1),~(1:0:1),~(1:0:-1)\}.$$
      By solving an easy linear system, we obtain the following expression \footnote{A {\it Macaulay2} script describing this example can be found in the personal webpage of the second author at \href{https://sites.google.com/view/alessandrooneto/research/list-of-papers}{https://sites.google.com/view/alessandrooneto/research/list-of-papers}.}
      $$
      	8f = (xy-y^2)^4 - (x^2-y^2)^4 + (x^2+y^2)^4 - (xy+y^2)^4.
      $$
\end{example}
\begin{remark}\label{rmk: HF of points}
Recall the following important properties on the Hilbert function of any homogeneous ideal $I_{\bX}\subset S$ defining a set of reduced points in projective space and which is contained in the apolar ideal $f^\perp$ {of any given polynomial $f$:}
 \begin{enumerate}
  \item $\HF_{R/I_{\bX}}$ is strictly increasing until it becomes constant and equal to the cardinality $|\bX|$;
  \item $\HF_{R/I_{\bX}}(i) \geq \HF_{R/f^\perp}(i)$, for all $i \geq 0$.
 \end{enumerate}
Hence, since the Hilbert function $\HF_{R/f^\perp}(i)$ is equal to the dimension of the image of ${\rm cat}_i(f)$ which is the rank of the matrix, we have that the rank of a catalecticant matrix ${\rm cat}_i(f)$ is always a lower bound for the Waring rank of $f$, for any $i = 1,\ldots,\deg(f)$.
 \end{remark}

\begin{example}[Lower bound]\label{example:xy7} Using the latter remark, we can also find lower bounds on $k$-th ranks. We consider the binary octic {$m = xy^7 \in S_8$. The monomial $M_0 = Y_{11}Y_{02}^3$ satisfies $\pi^{(4)}_{2,2}([M_0]) = [m]$} and it is well-known that its Waring rank is $\rk(M_0) = 4$. Therefore, $\rk_4(m) \leq 4$. We use Lemma \ref{lemma:rank_fiber} to prove that the equality holds. A general element in $\cF_m \setminus E$ is given by
 \begin{align*}
 F_c = \lambda (Y_{11}& Y_{02}^3)  \\
 & - (Y_{20}Y_{02}-Y_{11}^2) (c_{0}Y_{20}^2 + c_{1}Y_{20}Y_{11} + c_{2}Y_{20}Y_{02} + c_{3}Y_{11}^2 +c_{4}Y_{11}Y_{02} + c_{5}Y_{02}^2),
 \end{align*}
{where $\lambda \in \bC$ and $c = (c_0,\ldots,c_5) \in \bC^6$. Since we want $F_c \not\in E$, we may assume $\lambda = 1$.} We need to prove that, for {\it any} choice of the $c_i$'s, the Waring rank of $F_c$ is at least $4$. Since {$M_0$} has rank equal to $4$, we can restrict to the case where not all the $c_\al$'s are equal to zero. Now, the $2$-nd catalecticant matrix of $F_c$ is 
 $${\rm cat}_2(F_c) = \begin{pmatrix}0&
      0&
      6 {c}_{0}&
      {-4 {c}_{0}}&
      2 {c}_{1}&
      4 {c}_{2}\\
      0&
      {-4 {c}_{0}}&
      2 {c}_{1}&
      {-6 {c}_{1}}&
      -2 {c}_{2}+2 {c}_{3}&
      2 {c}_{4}\\
      6 {c}_{0}&
      2 {c}_{1}&
      4 {c}_{2}&
      -2 {c}_{2}+2 {c}_{3}&
      2 {c}_{4}&
      6 {c}_{5}\\
      {-4 {c}_{0}}&
      {-6 {c}_{1}}&
      -2 {c}_{2}+2 {c}_{3}&
      {-24 {c}_{3}}&
      {-6 {c}_{4}}&
      {-4 {c}_{5}}\\
      2 {c}_{1}&
      -2 {c}_{2}+2 {c}_{3}&
      2 {c}_{4}&
      {-6 {c}_{4}}&
      {-4 {c}_{5}}&
      6\\
      4 {c}_{2}&
      2 {c}_{4}&
      6 {c}_{5}&
      {-4 {c}_{5}}&
      6&
      0\\
      \end{pmatrix}$$
 Computing the {radical of the ideal generated} by {the} $4\times 4$ minors with the algebra software {\it Macaulay~2} \cite{M2} {gives the ideal} $(c_0,c_1,c_2,c_3,c_4)$. {Therefore, if $c_i \neq 0$, for some $0 \leq i \leq 4$, we have that the rank of ${\rm cat}_2(F_c)$ is at least $4$ and, by Remark \ref{rmk: HF of points}, the Waring rank of $F_c$ is at least $4$.
 \\ Now, if we assume that $c_0 = \ldots = c_4 = 0$ and also $c_5 = 0$, we have that $F_c = Y_{11}Y_{02}^3$ which has rank $4$. If $c_5 \neq 0$, we are left with the matrix
 $$
 {\rm cat}_2(F_c) = \begin{pmatrix}0&
      0&
      0&
      0&
      0&
      0\\
      0&
      0&
      0&
      0&
      0&
      0\\
      0&
      0&
      0&
      0&
      0&
      6 {c}_{5}\\
      0&
      0&
      0&
      0&
      0&
      {-4 {c}_{5}}\\
      0&
      0&
      0&
      0&
      {-4 {c}_{5}}&
      6\\
      0&
      0&
      6 {c}_{5}&
      {-4 {c}_{5}}&
      6&
      0\\
      \end{pmatrix}
 $$
 which has rank $3$ and the kernel is given by 
 $$
      \ker ({\rm cat}_2(F_{{c}})) = \langle Q_1,Q_2,Q_3 \rangle = 
      \left\langle y_{20}^2,~y_{20}y_{11},~2y_{11}y_{02}+3y_{11}^2\right\rangle.
  $$
  The ideal $(Q_1,Q_2,Q_3)$ defines a curvilinear $0$-dimensional scheme of length $3$ with support at the point $(0:0:1)\in\bP^2$. {Therefore}, as explained in \cite[Theorem 4]{BGI11}, the rank of $F_c$ is $7$.
  \\ Hence, all $F_c \in \cF_m \setminus E$ have rank at least $4$ and, by Lemma \ref{lemma:rank_fiber}, $xy^7$ has $4$-th rank at least $4$. Since, as we noticed, $xy^7 = (xy^3)y^4$ and the monomial $xy^3$ has Waring rank $4$, we conclude that $\rk_4(xy^7) = 4$. }\footnote{A {\it Macaulay2} script describing this example can be found in the personal webpage of the second author at \href{https://sites.google.com/view/alessandrooneto/research/list-of-papers}{https://sites.google.com/view/alessandrooneto/research/list-of-papers}.}
\end{example}

 In \cite{BCMT10,BGI11,OO13}, the authors give other linear algebra methods to compute Waring ranks and Waring decompositions. These methods can be used in our procedure instead of the classical catalecticant method to find better bounds on $k$-th Waring ranks. However, we have to observe that these methods are not always effective and might fail. Also, if we consider polynomials with higher number of variables or higher degrees, the dimension of the fiber grows quickly and this can make the computation very difficult and heavy. 
 
\appendix
\section{Fr\"oberg's Conjecture and generic ranks}\label{app:Froberg}
In \cite{Fr}, R.~Fr\"oberg studied Hilbert series of generic ideals.

\begin{conjecture}{\rm [Fr\"oberg's Conjecture, \cite{Fr}]}\label{conj:B} Generic ideals are Hilbert generic, i.e., for $I =(g_1,\ldots,g_s)$ where $g_1,\ldots,g_s$ are generic forms of degrees $d_1,\ldots,d_s$ in $n$ variables, then,
\begin{equation}\label{FC_formula}
 \HS_{S/I}(t) = \left\lceil\frac{\prod_{i=1}^s(1-t^{d_i})}{(1-t)^{n}}\right\rceil \in \bZ[\![t]\!],
\end{equation}
where $\left\lceil \cdot \right\rceil$ is the truncation of the power series at its first non-positive coefficient.
\end{conjecture}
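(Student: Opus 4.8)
Since Conjecture~\ref{conj:B} is Fröberg's Conjecture, open in general since the mid-1980s, I will not pretend to have a full strategy; what I would lay out instead is the shape of the problem, the ranges in which it is known (which are exactly the ones this paper needs), and where the genuine difficulty sits.

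First, I would point out that one inequality is essentially free: relation \eqref{FHS}, which is Fröberg's own observation, gives $\HS_{S/I}(t)\succeq\lceil\prod_i(1-t^{d_i})/(1-t)^n\rceil$ for \emph{every} homogeneous ideal with generators of those degrees, so the content of the conjecture is entirely the reverse inequality. Concretely, equality is equivalent to the statement that in each degree the ``expected'' relations are as independent as possible; that is, for every $j$ and every degree $e$ the multiplication map $[S/(g_1,\dots,g_{j-1})]_{e-d_j}\to[S/(g_1,\dots,g_{j-1})]_e$ by $g_j$ must have maximal rank (the Maximal Rank Property). Since being Hilbert generic is a Zariski-open condition (as recalled after \eqref{FHS}), it suffices, for a fixed degree vector $(d_1,\dots,d_s)$ and fixed $n$, to produce a single ideal attaining the bound, so one is free to pass to any convenient specialization, for instance powers of generic linear forms.

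The cases that are known — and that suffice here — are: $n=1$ (trivial); $n=2$ (Fröberg \cite{Fr}), which we package through powers of generic linear forms via \cite{GS} to obtain Lemma~\ref{lemma:Hilbert gen ideal} and hence Theorem~\ref{th:genkrank}; $n=3$ (Anick's theorem on thin algebras), which is what Appendix~\ref{app:Froberg} invokes to settle sums of squares in three variables; and, in any number of variables, $s\le n$ (a complete intersection, where the Koszul complex is exact so no truncation is needed) and $s=n+1$ (Stanley), where one realizes the generic almost complete intersection as $x_1^{d_1},\dots,x_n^{d_n},\ell^{d_{n+1}}$ with $\ell$ generic and uses the strong Lefschetz property of a monomial complete intersection in characteristic $0$ to force multiplication by $\ell^{d_{n+1}}$ to have maximal rank in every degree. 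A few further ranges are known (e.g.\ $s=n+2$, and, when all $d_i$ are equal, the single degree $d+1$ by Hochster and Laksov), but none of these is needed in this paper.

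The main obstacle — the reason the conjecture is still open for $n\ge 4$ — is precisely that no uniform mechanism is known to guarantee the Maximal Rank Property once $s\ge n+2$: the Lefschetz-type input that handles $s\le n+1$ has no known analogue when one must multiply an \emph{already special} graded algebra by a further generic form, and degree-by-degree combinatorial attacks have only succeeded in low dimension. For the present paper this gap is irrelevant, since only $n\le 3$ (in fact only $n=2$, together with the single instance $n=3$, $k=2$) enters, and there the conjecture is a theorem. Its role in the sequel is motivational: it shows that the generic-rank pattern of Conjecture~\ref{conj:otta} is a shadow of the Hilbert-series pattern predicted by Fröberg, and it is this link, in a ``$k$-th power'' form, that Appendix~\ref{app:Froberg} develops.
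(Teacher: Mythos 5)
This is Fröberg's Conjecture, which the paper itself does not (and cannot) prove: it merely states it, credits \cite{Fr}, and records the known cases ($s\le n$, $s=n+1$ by Stanley, $n=2$ by Fröberg, $n=3$ by Anick, plus Nenashev's results) together with its role via Conjecture~\ref{conj:C} in Appendix~\ref{app:Froberg} — exactly the status, ranges, and usage your proposal identifies, so your account is consistent with the paper's treatment. The one correction: your parenthetical claim that the case $s=n+2$ is known is not supported by the paper's references and is not an established case, so it should be removed or hedged.
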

\noindent Fr\"oberg's Conjecture attracted the {attention of many mathematicians, but, at the moment it is known to be true only in the following cases. For} $s \leq n$ (easy exercise), $s = n+1$ (by R. Stanley, \cite{St78}), $n = 2$ (by R. Fr\"oberg, \cite{Fr}) and $n = 3$ (by D. Anick, \cite{An}). More recently, G. Nenashev settled a large number of cases \cite{Ne17}. In \cite{N}, L.~Nicklasson suggested the following more general version that allow us to directly relate this commutative algebra problem to our computations on generic $k$-ranks.
\begin{conjecture}{\rm \cite{N}}\label{conj:C}
 Let $k \geq 2$. Let $I = (g_1^{k-1},\ldots,g_s^{k-1})$ be an ideal generated by $(k-1)$-th powers of generic homogeneous polynomials of degree $d > 1$. Then, $I$ is Hilbert generic.
\end{conjecture}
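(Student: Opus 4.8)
The plan is to run the same argument that establishes the two-variable case (Lemma~\ref{lemma:Hilbert gen ideal}) in arbitrary $n$, thereby reducing Conjecture~\ref{conj:C} to a postulation problem for generic fat points. Being Hilbert generic is a Zariski-open condition on the parameter space $S_d^{\,s}$ of $s$-tuples $(g_1,\dots,g_s)$: the assignment $(g_1,\dots,g_s)\mapsto(g_1^{k-1},\dots,g_s^{k-1})$ is a morphism, each $\dim_{\bC}[S/I]_j$ is upper semicontinuous (the rank of the relevant multiplication map drops only on closed loci), and Fr\"oberg's inequality \eqref{FHS} bounds $\dim_{\bC}[S/I]_j$ from below by the truncated value. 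Consequently the generic value of $\dim_{\bC}[S/I]_j$ is the minimum, and it suffices to exhibit a \emph{single} tuple attaining the Fr\"oberg bound in every degree. The natural candidate is $g_i = l_i^d$ for generic linear forms $l_1,\dots,l_s$, since then $g_i^{k-1}=l_i^{D}$ with $D:=d(k-1)$. Thus everything collapses to showing that the ideal $(l_1^{D},\dots,l_s^{D})$ of generic $D$-th powers of linear forms achieves the Fr\"oberg bound for $s$ forms of degree $D$.

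The next step is to translate this into a statement about fat points via apolarity (Emsalem--Iarrobino duality, see \cite{IK}). Let $P_1,\dots,P_s\in\bP^{n-1}$ be the points dual to $l_1,\dots,l_s$, and for each degree $j$ let $Z_j:=(j-D+1)(P_1+\dots+P_s)$ be the uniform fat-point scheme of multiplicity $m=j-D+1$. Macaulay duality then yields
$$\HF_{S/(l_1^{D},\dots,l_s^{D})}(j)=\dim_{\bC}[I_{Z_j}]_j\qquad (j\ge D-1),$$
as one verifies directly for $s=1$ from $\binom{j+n-1}{n-1}-\binom{j-D+n-1}{n-1}$. Hence $(l_1^{D},\dots,l_s^{D})$ is Hilbert generic exactly when, for every $j$, the generic uniform fat points $Z_j$ impose independent conditions up to the expected value $\max\{0,\ \binom{j+n-1}{n-1}-s\binom{m+n-2}{n-1}\}$ dictated by the naive parameter count; equivalently, when every $Z_j$ has the expected postulation in $\bP^{n-1}$.

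The hard part is precisely this reduced statement, which is the generic fat-points postulation problem with uniform multiplicities. For $n=2$ the points sit on $\bP^1$ and the claim is elementary --- this is what \cite{GS} supplies and what makes Lemma~\ref{lemma:Hilbert gen ideal} unconditional. For $n=3$ one lands on the Segre--Harbourne--Gimigliano--Hirschowitz (SHGH) picture in $\bP^2$: the multiplicity-two instance is Alexander--Hirschowitz, wide ranges of the uniform case are known, and this is compatible with the sum-of-squares-in-three-variables case settled in this appendix, so several subcases of Conjecture~\ref{conj:C} follow this way. For $n\ge 4$, however, the reduction terminates at the higher-dimensional analogue of SHGH, which is open; I do not expect the present method to remove this obstruction. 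A possible alternative route for the reduced step, avoiding fat points, is to keep the linear-forms specialization and analyze the weak Lefschetz property of $S/(l_1^{D},\dots,l_s^{D})$, or to induct on $s$ by controlling the kernel of multiplication by $l_{s+1}^{D}$ --- the device behind the cases $s\le n+1$ and $n\le 3$ of the ordinary Fr\"oberg conjecture. The rigid uniform-exponent structure appearing here may make such inductive control more tractable than in the fully generic setting, and that is where I would concentrate the effort.
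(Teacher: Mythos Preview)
The statement in question is a \emph{conjecture}; the paper does not prove it, and neither does your proposal. More importantly, your reduction has a genuine gap that is not just ``this lands on an open problem.''

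Your plan is to specialise $g_i = l_i^{\,d}$ and then argue that the ideal $(l_1^{D},\dots,l_s^{D})$, $D=d(k-1)$, is Hilbert generic. For $n=2$ this works, and it is precisely the content of Lemma~\ref{lemma:Hilbert gen ideal} via \cite{GS}. For $n\ge 3$, however, the reduced statement is not merely open --- it is \emph{false} in instances where Conjecture~\ref{conj:C} is known to hold. Take $n=3$, $D=3$, $s=5$ (so $d=3$, $k=2$). Here Conjecture~\ref{conj:C} is just Fr\"oberg's conjecture, true by Anick \cite{An}: five generic ternary cubics fill $S_4$. But your specialisation $(l_1^3,\dots,l_5^3)$ does \emph{not}: by the very Emsalem--Iarrobino duality you invoke, $\dim[S/(l_1^3,\dots,l_5^3)]_4$ equals the dimension of plane quartics through five general double points, which is $1$ (the square of the unique conic through the five points), not the Fr\"oberg value $0$. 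This is one of the Alexander--Hirschowitz defective cases, and analogous obstructions --- produced by $(-1)$-curves in the SHGH picture --- recur for infinitely many $(n,D,s)$.

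So the semicontinuity step is fine, but the chosen witness is the wrong one: powers of generic linear forms are strictly \emph{worse} than generic forms once $n\ge 3$, and your route reduces Conjecture~\ref{conj:C} to a statement with known counterexamples rather than to an open problem. Any attack on Conjecture~\ref{conj:C} beyond $n=2$ must either pick a different specialisation than $l_i^{\,d}$ or work directly with generic $g_i$, as Anick does for $n=3$ in the case $k=2$.
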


By using the same approach as in our proof of Theorem \ref{th:genkrank}, we prove the following.
\begin{theorem}\label{thm:appendix_Froberg}
 Conjecture \ref{conj:C} implies Conjecture \ref{conj:otta}.
\end{theorem}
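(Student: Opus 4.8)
The plan is to run the proof of Theorem~\ref{th:genkrank} essentially verbatim, now in $n$ variables and with Conjecture~\ref{conj:C} playing the role of Lemma~\ref{lemma:Hilbert gen ideal}. First I would fix $k,d,n\ge 2$, set $I(s)=(g_1^{k-1},\ldots,g_s^{k-1})$ for generic forms $g_1,\ldots,g_s$ of degree $d$ in $S=\bC[x_1,\ldots,x_n]$, and use Lemma~\ref{Terracini} together with \eqref{Terracini_codim} to reduce to $\rk^\circ_k(n,kd)=\min\{s\ :\ \HF_{S/I(s)}(kd)=0\}$. Invoking Conjecture~\ref{conj:C} --- whose hypothesis $d>1$ is our standing assumption, and which in the case $k=2$ just asserts that $s$ generic forms of degree $d$ generate a Hilbert generic ideal --- I would then record that $I(s)$ is Hilbert generic, so that
\[
 \HS_{S/I(s)}(t)=\left\lceil H^{(s)}(t)\right\rceil,\qquad H^{(s)}(t)=\frac{(1-t^{d(k-1)})^s}{(1-t)^n}=:\sum_{i\ge 0}H^{(s)}_i\,t^i.
\]

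Next I would examine the coefficients $H^{(s)}_i$ in the window $d(k-1)\le i\le kd$, where the truncation can first take effect. Using $(1-t^{d(k-1)})^s=1-s\,t^{d(k-1)}+\binom{s}{2}t^{2d(k-1)}-\cdots$ and the fact that $2d(k-1)>kd$ for $k\ge 3$ whereas $2d(k-1)=kd$ for $k=2$, one obtains, for $d(k-1)\le i<kd$,
\[
 H^{(s)}_i=\binom{i+n-1}{n-1}-s\binom{i-d(k-1)+n-1}{n-1};
\]
the same expression holds at $i=kd$ when $k\ge 3$, while for $k=2$ there is the extra term $H^{(s)}_{kd}=\binom{kd+n-1}{n-1}-s\binom{d+n-1}{n-1}+\binom{s}{2}$. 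Also $H^{(s)}_i=\dim S_i>0$ for $i<d(k-1)$. Rearranging the defining inequality of \eqref{eq:main} in each of the two cases shows that its right-hand side $c_k$ equals $s_{kd}:=\min\{s\ :\ H^{(s)}_{kd}\le 0\}$. The crux is the monotonicity $s_{d(k-1)}\ge s_{d(k-1)+1}\ge\cdots\ge s_{kd}$ of the thresholds $s_i:=\min\{s\ :\ H^{(s)}_i\le 0\}$; for the steps with $i<kd$ this holds because
\[
 \frac{\binom{i+n-1}{n-1}}{\binom{i-d(k-1)+n-1}{n-1}}=\prod_{l=1}^{n-1}\frac{i+l}{i-d(k-1)+l}
\]
is a product of factors each strictly decreasing in $i$.

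Granting this monotonicity, both directions are immediate. If $s<c_k=s_{kd}$, then $H^{(s)}_{kd}>0$, and since $s_i\ge s_{kd}>s$ for all $i$ in the window while $H^{(s)}_i>0$ automatically for $i<d(k-1)$, no coefficient of $H^{(s)}(t)$ in degree $\le kd$ is non-positive; hence $\HF_{S/I(s)}(kd)=H^{(s)}_{kd}>0$. If instead $s=c_k$, then $H^{(c_k)}_{kd}\le 0$, so the first non-positive coefficient of $H^{(c_k)}(t)$ occurs in degree $\le kd$ and the truncation forces $\HF_{S/I(c_k)}(kd)=0$. Together these give $\min\{s\ :\ \HF_{S/I(s)}(kd)=0\}=c_k$, which is exactly the assertion of Conjecture~\ref{conj:otta}.

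I expect the only genuine obstacle to be the final step of the threshold monotonicity when $k=2$, that is the inequality $s_{kd}\le s_{kd-1}$: here the clean ``product of decreasing factors'' estimate breaks down because of the $\binom{s}{2}$ correction in $H^{(s)}_{kd}$. Concretely, one must check that $s=\lceil\binom{2d+n-2}{n-1}/\binom{d+n-2}{n-1}\rceil$ already satisfies $s\binom{d+n-1}{n-1}-\binom{s}{2}\ge\binom{2d+n-1}{n-1}$. This is a routine but slightly delicate comparison of binomial coefficients --- comfortable for $d$ large, with only finitely many small pairs $(d,n)$ to be verified directly --- and everything else is bookkeeping parallel to the proof of Theorem~\ref{th:genkrank}.
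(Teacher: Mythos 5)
Your overall strategy is exactly the paper's: reduce via Lemma \ref{Terracini} and \eqref{Terracini_codim} to the Hilbert function in degree $kd$ of $I=(g_1^{k-1},\ldots,g_s^{k-1})$ for generic $g_i$ of degree $d$, invoke Conjecture \ref{conj:C} in place of Lemma \ref{lemma:Hilbert gen ideal}, compute $H_i$ on the window $d(k-1)\le i\le kd$, and deduce $\HF_{S/I}(kd)=\max\{0,H_{kd}\}$ from the monotonicity of the thresholds $s_i=\min\{s~|~H_i\le 0\}$; that monotonicity is precisely the paper's Lemma \ref{lemma:1}, and your identification of the right-hand side of \eqref{eq:main} with $s_{kd}$ is the same bookkeeping. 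Your argument for the steps with $i<kd$ (hence for all steps when $k\ge 3$) via the product $\prod_{l=1}^{n-1}(i+l)/(i-d(k-1)+l)$ of factors decreasing in $i$ is correct and even makes explicit a point the paper only asserts.

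The genuine gap is exactly where you stop: for $k=2$ the inequality needed for $s_{2d}\le s_{2d-1}$, namely that $s=\bigl\lceil \binom{2d+n-2}{n-1}/\binom{d+n-2}{n-1}\bigr\rceil$ satisfies $s\binom{d+n-1}{n-1}-\binom{s}{2}\ge\binom{2d+n-1}{n-1}$, is asserted but not proved, and it is not a check with ``only finitely many small pairs $(d,n)$ to verify''. Both $d$ and $n$ are unbounded, $s$ grows with both parameters, and the correction $\binom{s}{2}$ is of the same order of magnitude as the other two terms (for fixed $d$ and $n\to\infty$ all three sides grow like constants times $n^{2d}$), so a uniform two-parameter argument is required; there is no regime in which the inequality becomes trivially ``comfortable''. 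In the paper this is the bulk of the proof of Lemma \ref{lemma:1}: the inequality is reduced through \eqref{equation:Frob_implies_conjA_2} and the stronger \eqref{equation:Frob_implies_conjA_3} to \eqref{equation:Frob_implies_conjA_4}, i.e.\ $\prod_{i=1}^{n-2}(2d+i)\le \tfrac12\prod_{i=1}^{n-2}(d+i)^2/i$, which is then proved by induction on $n$ using $2d+n-1\le (d+n-1)^2/(n-1)$, with base cases $n=3$ (for $d\ge 3$) and $n=4$ (for $d=2$) and a direct check of $(n,d)=(3,2)$. Until you supply such an argument, the $k=2$ case of the implication --- the only case in Conjecture \ref{conj:otta} carrying the $\binom{s}{2}$ term, and hence the only delicate one --- is not established; everything else in your proposal matches the paper's proof.
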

First, we need the following lemma.
\begin{lemma}\label{lemma:1}
 Let $d,k,n \geq 2$ be positive integers and $H(t) = \frac{(1-t^{d(k-1)})^s}{(1-t)^{n}} =: \sum_{i\geq 0} H_it^i$. {Then, $$s_{kd} \leq s_{kd-1} \leq \ldots \leq s_{d(k-1)},$$ where $s_i := \min\{ s ~|~ H_i \leq 0\}$.}
\end{lemma}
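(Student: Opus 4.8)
The plan is to make the coefficient count explicit in the relevant degree range and then show that monotonicity of $s_i$ reduces to an elementary inequality. First I would observe that the numerator $(1-t^{d(k-1)})^s$ has, in the range of degrees $i$ with $d(k-1) \le i \le dk$, only two contributing binomial terms, namely $1$ and $-s\,t^{d(k-1)}$; all higher terms $\binom{s}{2}t^{2d(k-1)}$ and beyond start in degree $2d(k-1) \ge d(k-1)+d > dk$ (using $d(k-1) \ge d$, which holds since $k \ge 2$, and $d \ge 2 > 1$; more precisely $2d(k-1) - dk = d(k-2) \ge 0$, with strict inequality the moment $k \ge 3$, and for $k=2$ the range is $d \le i \le 2d$ and $2d(k-1) = 2d$ lands exactly at the top, contributing $\binom s2$ there). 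So for $d(k-1) \le i \le dk$ I get
$$
H_i = \binom{n-1+i}{n-1} - s\binom{n-1+i-d(k-1)}{n-1} + (\text{a nonnegative correction only when } i = 2d(k-1)=dk,\ k=2).
$$
I would handle the $k\ge 3$ case first, where the correction term is absent throughout the range, and treat $k=2$ as a small separate remark (there the range has length $d$ and the claim is a short direct check).

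Next, writing $a_i := \binom{n-1+i}{n-1}$ and $b_i := \binom{n-1+i-d(k-1)}{n-1}$, so that $H_i = a_i - s\,b_i$ for $d(k-1)\le i < dk$ (and $\le$ at the endpoint for $k=2$), the quantity $s_i = \min\{s : H_i \le 0\}$ equals $\lceil a_i / b_i\rceil$. Hence the chain $s_{dk} \le s_{dk-1} \le \cdots \le s_{d(k-1)}$ will follow once I show that the ratio $a_i/b_i$ is nondecreasing in $i$ on this range — equivalently, $a_{i+1} b_i \ge a_i b_{i+1}$, i.e.
$$
\binom{n-1+i+1}{n-1}\binom{n-1+i-d(k-1)}{n-1} \ \ge\ \binom{n-1+i}{n-1}\binom{n-1+i+1-d(k-1)}{n-1}.
$$
This is the inequality $\frac{a_{i+1}}{a_i} \ge \frac{b_{i+1}}{b_i}$, and since $\frac{\binom{n-1+m+1}{n-1}}{\binom{n-1+m}{n-1}} = \frac{m+n}{m+1}$ is a decreasing function of $m$, and the index on the $a$-side, $m = i$, exceeds the index on the $b$-side, $m = i - d(k-1)$, the inequality holds. (One must be slightly careful at $i = d(k-1)$ where $b_i = 1$ and $b_{i-1} = 0$, but there $s_i$ is trivially $1$ or the defining set is handled directly, so the base of the chain is fine.)

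I don't anticipate a serious obstacle here; the only real subtlety is bookkeeping around the endpoints of the degree interval and the $k=2$ exceptional term in the numerator. The cleanest exposition is: reduce to $H_i = a_i - s b_i$ on $d(k-1) \le i \le dk$ (isolating the $k=2$ boundary correction), identify $s_i = \lceil a_i/b_i\rceil$, and then invoke log-concavity / the monotonicity of consecutive binomial ratios to conclude $a_i/b_i$ is nondecreasing, which is exactly the claimed ordering of the $s_i$. The main thing to get right is that we only ever need the inequality on the closed range $[d(k-1), dk]$, where the truncation $\lceil\cdot\rceil$ in Fr\"oberg's formula has not yet kicked in for the values of $s$ we care about — but that is precisely the content of the surrounding argument in the proof of Theorem \ref{th:genkrank} and its higher-dimensional analogue, so it can be cited rather than re-derived.
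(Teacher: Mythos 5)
Your plan for the range below the top degree is essentially the paper's argument, but it contains an internal direction slip. Since $s_i=\lceil a_i/b_i\rceil$ and the claimed chain is $s_{kd}\le s_{kd-1}\le\dots\le s_{d(k-1)}$, what you need is that $a_i/b_i$ is \emph{non-increasing} in $i$, not nondecreasing; accordingly the displayed inequality $a_{i+1}b_i\ge a_ib_{i+1}$ that you assert is false as written — it is the reverse inequality that holds, and it is exactly what your own ratio comparison delivers: $\frac{a_{i+1}}{a_i}=\frac{i+n}{i+1}\le\frac{i-d(k-1)+n}{i-d(k-1)+1}=\frac{b_{i+1}}{b_i}$, because $\frac{m+n}{m+1}$ is decreasing in $m$ and the $a$-side index $i$ exceeds the $b$-side index $i-d(k-1)$. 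So two sign errors cancel and the substance of the $k\ge 3$ case is correct; it matches the paper, which simply records $H_i=\binom{i+n-1}{n-1}-s\binom{i-d(k-1)+n-1}{n-1}$ on $d(k-1)\le i\le dk$ and treats the monotonicity as immediate (handling $n=2$ by reference to the proof of Theorem \ref{th:genkrank}).

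The genuine gap is the $k=2$ case, which you defer as ``a short direct check'': that endpoint is where essentially all of the paper's proof lives. At $i=2d$ one has $H_{2d}=\binom{2d+n-1}{n-1}-s\binom{d+n-1}{n-1}+\binom{s}{2}$, so $H_{2d}$ is quadratic in $s$, the identification $s_i=\lceil a_i/b_i\rceil$ breaks down there, and the extra $\binom{s}{2}$ enters with the unfavourable sign, so $s_{2d}\le s_{2d-1}$ does \emph{not} follow from the ratio monotonicity used elsewhere. The paper proves it by showing $H_{2d}\le 0$ at $s=\binom{2d+n-2}{n-1}\big/\binom{d+n-2}{n-1}$, which after manipulation becomes the binomial inequality \eqref{equation:Frob_implies_conjA_2}, is strengthened to \eqref{equation:Frob_implies_conjA_3}, reduced to \eqref{equation:Frob_implies_conjA_4}, and finally established by induction on $n$ with separately checked base cases. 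Your proposal contains no argument for this inequality, so as it stands the $k=2$ part of the lemma (which is needed for the sums-of-squares applications) is unproved; the $k\ge 3$ part is fine once the monotonicity direction is corrected.
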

\begin{proof}
{For $n = 2$, the claim is part of the proof of Theorem \ref{th:genkrank}. Then, we assume $n \geq 3$.} 

For $k \geq 3$, the statement directly follows from the fact that 
{$$H_i = {i+n-1 \choose n-1} - s {i-d(k-1)+n-1 \choose n-1},$$} for any ${d(k-1) \leq i \leq dk}$. For $k = 2$, the same formula holds for {$i < dk$}, but we have 
{$$H_{2d} = {2d+n-1 \choose n-1} - s {d+n-1 \choose n-1} + {s \choose 2}.$$} Hence, we still have to prove that, for $k = 2$, we have also $s_{2d} \leq s_{2d-1}$.

Since {$s_{2d-1} = \left\lceil\frac{{2d+n-2 \choose n-1}}{{n+d-2 \choose n-1}}\right\rceil$}, it is enough to prove that
$$
 H_{2d} \leq 0, \text{ for } s = \frac{{2d+n-2 \choose n-1}}{{n+d-2 \choose n-1}}.
$$
Then, we need to {show} that
 \begin{align*}
  2{{n+d-2} \choose {n-1}}^2{{n+2d-1} \choose {n-1}} & - 2{n+2d-2 \choose n-1}{n+d-1 \choose n-1}{n+d-2 \choose n-1} + \\
  & + {n+2d-2 \choose n-1}^2 - {n+2d-2 \choose n-1}{n+d-2 \choose n-1} \leq 0.
 \end{align*}
 It is easy to see that ${n+d-2 \choose n-1}{n+2d-1 \choose n-1} -  {n+2d-1 \choose n-1}{n+d-2 \choose n-1} = - {n+d-2 \choose n-1}{n+2d-2 \choose n-2}$; hence, we need to show that
  \begin{equation}\label{equation:Frob_implies_conjA_2}
 {n+2d-2 \choose n-1}^2 - {n+2d-2 \choose n-1}{n+d-2 \choose n-1} \leq 2{n+d-2 \choose n-1}^2{n+2d-2 \choose n-2}.
 \end{equation}
 For $n = 3$ and $d = 2$, we can directly check that this {inequality holds}. For $n \geq 3$, $d \geq 2$ and $nd > 6$, we show the following inequality, stronger than \eqref{equation:Frob_implies_conjA_2},
 \begin{equation}\label{equation:Frob_implies_conjA_3}
 {n+2d-2 \choose n-1}^2  \leq 2{n+d-2 \choose n-1}^2{n+2d-2 \choose n-2}
 \end{equation}
 By expanding the binomials in \eqref{equation:Frob_implies_conjA_3}, we have
 \begin{equation}
  \frac{\big((n+2d-2)!\big)^2}{\big((n-1)!\big)^2\big((2d-1)!\big)^2} \leq 2 \frac{\big((n+d-2)!\big)^2}{\big((n-1)!\big)^2\big((d-1)!\big)^2}\cdot \frac{\big((n+2d-2)!\big)}{\big((n-2)!\big)\big((2d)!\big)}.
 \end{equation}
 Simplifying the latter inequality, we are left with
 \begin{equation}\label{equation:Frob_implies_conjA_4}
  \prod_{i=1}^{n-2} (2d+i) \leq \frac{1}{2} \cdot \prod_{i=1}^{n-2} \frac{\left(d+i\right)^2}{i}.
 \end{equation}
{For $n = 3$, we need to show that 
 $$
 2d+1 \leq \frac{1}{2}\left(d+1\right)^2 \Longleftrightarrow 4d^2 - 8d - 4 \geq 0,
 $$
 which holds for $d \geq 3$. Now, for any $d \geq 2$, we proceed by induction on $n \geq 3$. In particular, if $d \geq 3$, {we use the case $n = 3$ for the base of induction, while, if $d = 2$, we use $n = 4$ as the base step}, which can be checked directly. Assume that formula \eqref{equation:Frob_implies_conjA_4} holds for $n$. Since 
 $$
 	2d+n-1 \leq \frac{(d+n-1)^2}{n-1} = \frac{d^2}{n-1} + 2d + n - 1,
 $$
 we conclude that \eqref{equation:Frob_implies_conjA_4} holds also for $n+1$. }
 
 This concludes the proof.
\end{proof}
\begin{proof}[Proof of Theorem \ref{thm:appendix_Froberg}] 
 Let $k,d,n\geq 2$ be integers. By Lemma \ref{Terracini} and \eqref{Terracini_codim}, we know that the dimension of the $s$-th secant variety of $V_{n,kd}^{(k)}$ is given by the dimension of the homogeneous part in degree $kd$ of the ideal $I = (g_1^{k-1},\ldots,g_s^{k-1})$, where the $g_i$'s are generic forms of degree $d$. Moreover, by Conjecture \ref{conj:C}, we have $\HS_{S/I}(t) = \left\lceil H(t) \right\rceil$, where $H(t) = \frac{(1-t^{d(k-1)})^s}{(1-t)^{n}} = \sum_{i\geq 0} H_i(t)$. {In particular,
 $$
   H_{kd} = 
  \begin{cases}
   {2d+n-1 \choose n-1} - s{d+n-1 \choose n-1} + {s \choose 2}, & \text{ for } k = 2; \\
   {kd+n-1 \choose n-1} - s{d+n-1 \choose n-1}, & \text{ for } k \geq 3.
  \end{cases}
 $$
  By Lemma \ref{lemma:1}, we have $\HF_{S/I}(kd) = \max\{0, H_{kd}\}$.} Thus, Conjecture \ref{conj:otta} directly follows from \eqref{Terracini_codim}.
\end{proof}
In particular, by the result of D.~Anick, we obtain the following result about $2$-ranks.
{
\begin{corollary}\label{thm:appendix_small_secants}
Conjecture \ref{conj:otta} holds in the case of sums of squares of ternary forms of even degree, i.e., 
$$
	\rk_2^\circ(3,2d) = \left\lceil \frac{{2d + 2 \choose 2}}{{d+2 \choose 2}} \right\rceil,
$$
except for $d = 1,3,4$, where it is $\left\lceil \frac{{2d + 2 \choose 2}}{{d+2 \choose 2}} \right\rceil + 1$.
\end{corollary}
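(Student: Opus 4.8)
The plan is to deduce the corollary from Theorem~\ref{thm:appendix_Froberg} and Anick's theorem, and then to evaluate the resulting combinatorial minimum. The key point is that, when $k = 2$, the ideal in Conjecture~\ref{conj:C} is $I = (g_1^{k-1},\ldots,g_s^{k-1}) = (g_1,\ldots,g_s)$, i.e., it is simply a generic ideal in $S = \bC[x,y,z]$; hence Conjecture~\ref{conj:C} for $(k,n) = (2,3)$ is nothing but Fr\"oberg's Conjecture~\ref{conj:B} in three variables, which is a theorem of Anick \cite{An}. Running the argument in the proof of Theorem~\ref{thm:appendix_Froberg} with $n = 3$, $k = 2$ and any $d \geq 2$ then gives
$$
 \rk_2^\circ(3,2d) \;=\; \min\left\{ s \geq 1 ~:~ s\binom{d+2}{2} - \binom{s}{2} \geq \binom{2d+2}{2}\right\} \;=:\; s^\ast(d),
$$
so it remains only to identify $s^\ast(d)$ with the explicit expression in the statement.

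For this step I would expand the two identities
$$
 3\binom{d+2}{2} - \binom{2d+2}{2} = \tfrac12(d+1)(4-d), \qquad 4\binom{d+2}{2} - \binom{2d+2}{2} = 3(d+1).
$$
The second one shows that $s = 4$ always satisfies the defining inequality of $s^\ast(d)$ (because $3(d+1) \geq \binom{4}{2}$ for $d \geq 1$), so $s^\ast(d) \leq 4$. The first shows that $s = 3$ satisfies it exactly when $\tfrac12(d+1)(4-d) \geq \binom{3}{2}$, i.e. when $(d-1)(d-2) \leq 0$, i.e. for $d \in \{1,2\}$; and a one-line check shows that $s = 2$ never works for $d \geq 1$. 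Hence $s^\ast(d) = 3$ for $d \in \{1,2\}$ and $s^\ast(d) = 4$ for $d \geq 3$. On the other hand $\big\lceil \binom{2d+2}{2}/\binom{d+2}{2}\big\rceil = \big\lceil 4 - \tfrac{6}{d+2}\big\rceil$ equals $2$ for $d = 1$, equals $3$ for $d \in \{2,3,4\}$, and equals $4$ for $d \geq 5$. Comparing the two lists yields equality for $d = 2$ and for all $d \geq 5$, and a surplus of exactly $1$ for $d \in \{1,3,4\}$, which is precisely the assertion of the corollary.

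The degree $d = 1$ case is excluded from Conjecture~\ref{conj:C} (which requires $d > 1$), but it is the classical quadric exception of Alexander--Hirschowitz: a generic ternary quadric has matrix rank $3$, so it is a sum of three and not two squares of linear forms, giving $\rk_2^\circ(3,2) = 3 = \big\lceil\binom{4}{2}/\binom{3}{2}\big\rceil + 1$. Equivalently, $s$ generic linear forms in three variables generate an ideal $I$ with $\HF_{S/I}(2) = 0$ if and only if $s \geq 3$, which again gives $s^\ast(1) = 3$.

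I do not expect a real obstacle here: the only substantive observation is that for $k = 2$ the (open in general) Conjecture~\ref{conj:C} collapses to Fr\"oberg's Conjecture in at most three variables, settled by Anick; everything else is the elementary binomial manipulation above together with the bookkeeping of the three small exceptional degrees. The same strategy would settle Conjecture~\ref{conj:otta} for sums of squares in any number of variables for which Fr\"oberg's Conjecture is known, but for $n \geq 4$ it is exactly the open status of Fr\"oberg's Conjecture that blocks the argument.
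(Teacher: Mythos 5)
Your proposal is correct and follows essentially the same route as the paper: for $k=2$ the ideal in Conjecture~\ref{conj:C} is a generic ideal in three variables, so Anick's theorem supplies the needed case of Fr\"oberg's Conjecture, and the machinery of Theorem~\ref{thm:appendix_Froberg} (including Lemma~\ref{lemma:1}) reduces everything to the binomial computation you carry out, whose exceptional degrees $d=1,3,4$ match the statement. The explicit arithmetic and the separate classical treatment of $d=1$ are exactly the bookkeeping the paper leaves implicit.
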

\begin{remark}From the result of R. Stanley, we know that Fr\"oberg's Conjecture holds for $s \leq n+1$. Hence, from Terracini's Lemma and the proof of Theorem \ref{thm:appendix_Froberg}, we can conclude that, for $s \leq n+1$,
$$
	{\rm codim}~\sigma_s V_{n,kd}^{(k)} = \left\lceil\frac{(1-t^{d(k-1)})^s}{(1-t)^{n}}\right\rceil.
$$
\end{remark}
\begin{remark} In \cite{FOS}, the authors prove that, for any $n,k,d \geq 2$ positive integers, we have $\rk^\circ_k(n,kd) \leq k^n$ and, for $d \gg 0$, this bound is sharp. Hence, fixed $n$ and $k$, we have only finitely many cases left to compute the generic $k$-rank. In these case, if $n$ and $k$ are not too large, Fr\"oberg's Conjecture can be checked by computer. For example, it is possible to conclude that, 
$$
	\rk_2^\circ(4,2d) = \left\lceil \frac{{2d + 3 \choose 3}}{{d+3 \choose 3}} \right\rceil,
$$
except for $d = 1,2$, where $\rk_2^\circ(4,2d) = \left\lceil \frac{{2d + 3 \choose 3}}{{d+3 \choose 3}} \right\rceil + 1$. See \cite[Section 3.3.2]{One} for more details.
\end{remark}
}

{
\section{$k$-ranks of monomials}\label{appendix: monomials}
In this section, we continue the {study} started in \cite{CO} about $k$-ranks of monomials. 
\begin{proposition}
Let $m = x_1^{a_1} \cdots x_n^{a_n}$ be a monomial of degree $dk$ in $S$ and suppose that $ (k-2)n \leq d$. Then,
$\rk_k(m)\le k$.
\end{proposition}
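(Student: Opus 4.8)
The plan is to deduce the statement from the classical fact, recalled in the introduction (see \cite{CS}), that the maximal Waring rank of a binary form of degree $k$ is $k$; in particular the binary monomial $uv^{k-1}$ is a sum of at most $k$ $k$-th powers of linear forms. Concretely, I would first show that the hypothesis $(k-2)n\le d$ forces the monomial $m$ to be of the shape $m=\nu_1\nu_2^{\,k-1}$ for suitable \emph{monomials} $\nu_1,\nu_2\in S_d$ of degree $d$, and then transport a length-$k$ power decomposition of $uv^{k-1}$ along the substitution $u\mapsto\nu_1$, $v\mapsto\nu_2$.

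The key step is therefore the following arithmetic claim: if $(k-2)n\le d$, then there exist $u,v\in\bN^{n}$ with $u_1+\cdots+u_n=v_1+\cdots+v_n=d$ and $(a_1,\dots,a_n)=u+(k-1)v$. To prove it, set $r_i:=a_i\bmod(k-1)$, so $0\le r_i\le k-2$ and hence $\sum_i r_i\le(k-2)n\le d$. Since $\sum_i r_i\equiv\sum_i a_i=kd\equiv d\pmod{k-1}$, the integer $N:=(d-\sum_i r_i)/(k-1)$ is non-negative; and because $N\le(kd-\sum_i r_i)/(k-1)=\sum_i(a_i-r_i)/(k-1)$, one can pick non-negative integers $s_i\le(a_i-r_i)/(k-1)$ with $\sum_i s_i=N$. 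Putting $u_i:=r_i+(k-1)s_i$ and $v_i:=(a_i-u_i)/(k-1)$ then gives vectors in $\bN^{n}$ with $\sum_i u_i=\sum_i r_i+(k-1)N=d$, with $\sum_i v_i=(kd-d)/(k-1)=d$, and with $u+(k-1)v=a$ by construction. This is the only point at which the assumption $(k-2)n\le d$ enters, and I expect it to be the real content of the argument: the exponent vector $a$ must be ``balanceable'' as $u+(k-1)v$ with $u$ and $v$ both of size $d$, and the bound $(k-2)n\le d$ is exactly what makes the residues $r_i$ cheap enough to absorb.

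Granting the claim, write $\nu_1:=x_1^{u_1}\cdots x_n^{u_n}$ and $\nu_2:=x_1^{v_1}\cdots x_n^{v_n}$, both of degree $d$, so that $m=\nu_1\nu_2^{\,k-1}$. By the maximal-rank statement above, or explicitly by Sylvester's algorithm via the Apolarity Lemma (Lemma~\ref{lemma:apolarity} and Example~\ref{ex: sylvester}) applied to $uv^{k-1}$, there are constants $c_i,\alpha_i,\beta_i\in\bC$ with $uv^{k-1}=\sum_{i=1}^{k}c_i(\alpha_i u+\beta_i v)^{k}$ in $\bC[u,v]$. Substituting $u\leftarrow\nu_1$ and $v\leftarrow\nu_2$ (equivalently, applying the ring homomorphism $\bC[u,v]\to S$ sending $u\mapsto\nu_1$, $v\mapsto\nu_2$, which carries the degree-$j$ part of $\bC[u,v]$ into $S_{jd}$) yields $m=\nu_1\nu_2^{\,k-1}=\sum_{i=1}^{k}c_i(\alpha_i\nu_1+\beta_i\nu_2)^{k}$, an expression of $m$ as a sum of $k$ $k$-th powers of forms of degree $d$; hence $\rk_k(m)\le k$. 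Once the arithmetic claim is secured, this reduction is purely formal, so I do not anticipate any further obstacle.
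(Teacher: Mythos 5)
Your proposal is correct and follows essentially the same route as the paper: reduce modulo $k-1$ to factor $m=\nu_1\nu_2^{k-1}$ with $\deg\nu_1=\deg\nu_2=d$ (your residues $r_i$, quotient $N$, and distribution $s_i\le q_i$ match the paper's $r_i$, $b$, $b_i$), then invoke the classical fact that $uv^{k-1}$ is a sum of $k$ $k$-th powers of linear forms and substitute $u\mapsto\nu_1$, $v\mapsto\nu_2$. No substantive differences.
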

\begin{proof}
It is enough to show that the monomial can be written as $m = m_1 m_2^{k-1}$ with $|m_1| = |m_2| = d$, since it follows from a classical well-known result, e.g., see \cite[Theorem 5.4]{Re2}, that the Waring rank of the monomial $x y^{k-1}$ is equal to $k$. \\
\indent For each $i$, write $a_i = q_i (k-1) + r_i$ with $0 \leq r_i \leq k-2$.
We have $a_1 + \cdots + a_n  = d(k-1) + d$, so $a_1 + \cdots + a_n$ is equivalent to $d$ modulo ($k-1$); that is, $r_1 + \cdots + r_n \equiv_{k-1} d$. From $r_1 + \cdots + r_n \leq (k-2)n$ and the assumption that $(k-2)n \leq d$, we get 
$r_1 + \cdots + r_n \leq d.$ It follows that 
$r_1 + \cdots + r_n + b (k-1) =  d$ for some non-negative integer $b$. \\
\indent We have $q_1+\cdots+q_n = (dk - (r_1+\cdots + r_n)/(k-1)$ and $b = (d-(r_1+\cdots r_n))/(k-1)$, so $q_1+\cdots+q_n \geq  b$. Thus, we can choose $b_1,\ldots,b_n$ such that
$b_1 + \cdots + b_n = b$ and such that $q_i \geq b_i$ for $i = 1, \ldots, n$.\\
\indent {Take} $m_1 = x_1^{r_1 + b_1 (k-1) } \cdots x_n^{r_n + b_n (k-1)}$  and let $m_2 = x_1^{ q_1 - b_1 } \cdots x_n^{q_n  - b_n}$.
{Then,} $m = m_1 m_2^{k-1}$ and we conclude the proof.
\end{proof}
\begin{example}
Let $n = 3, k = 4, d = 6$, i.e., ternary monomials of degree $24$ decomposed as sums of $4$-th {powers}. Consider $m = x_1^{3} x_2^{10} x_3^{11}.$ Hence, from the construction in the proof of {the} previous proposition, we have that 
$r_1 = 0, r_2 = 1, r_3 = 2, q_1 = 1, q_2 = 3, q_3 = 3$, $r_1 + r_2 + r_3 = 3$, so $b = 1$. Choose $b_2 = 1$.
Now, we write $m_1 = x_2^{4} x_3^2$, $m_2 = x_1 x_2^{2} x_3^{3}$ and $m = m_1 m_2^3$, so the rank is at most four.
\end{example}
}

\section{A canonical form for binary forms}\label{appendix: canonical}

The following results were obtained as a biproduct of the present study and they look similar to  Sylvester's original result and other known canonical forms, see \cite {Re1}. {Although they are not immediately relevant to our main topic, we include them here for the sake of completeness.}

\begin{theorem}\label{th:gen} Given positive integers $k\ge 2$ and $d\ge 1$, 

\medskip
\noindent
{\rm (i)} one can uniquely present a  general  binary form $p:=p(x,y)$ of degree  $kd$ as 
\begin{equation}\label{eq:gen}
p=p_0^k+y^dp_1^{k-1}+y^{2d}p_2^{k-2}+\dots +y^{(k-1)d}p_{k-1}^2+ y^{{(k-2)}d}p_{k-1}, 
\end{equation}
where every $p_j,\; j=0,2,\dots, k-2$ is a binary form of degree  $d$ with term $y^d$ missing and 
$p_{k-1}$ is a binary form of degree  $d$ (with no additional restrictions); each $p_i^{k-i}$ is uniquely defined which implies that $p_i$ itself is defined up to the $(k-i)$-th root of unity:

\medskip
\noindent
{\rm (ii)} one can  (non-uniquely) present any binary form $p:=p(x,y)$ of degree  $kd$ containing the monomial $x^{kd}$ as~\eqref{eq:gen},  
where every $p_j,\; j=0,2,\dots, k-1$ is a binary form of degree  $d$;

\end{theorem}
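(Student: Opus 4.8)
The plan is to prove both parts at once by a single explicit recursive construction — ``peeling off powers one degree-$d$ block at a time'' — which yields existence, the uniqueness in (i), and the unconstrained (but non-unique) presentation in (ii) simultaneously. I would avoid a Jacobian/dimension-count proof of (i): at the obvious test points (say $p_0=x^d$ and all other $p_j=0$) the differential of the parametrization $(p_0,\dots,p_{k-1})\mapsto p_0^k+y^dp_1^{k-1}+\cdots$ degenerates, since a factor $p_j^{k-j}$ with $p_j=0$ kills first-order variations of $p_j$ whenever $k-j\ge 2$; a constructive argument is cleaner and gives uniqueness for free.

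The engine is an elementary triangularity fact about powers of a binary form. Write $g=a_0x^d+a_1x^{d-1}y+\cdots+a_dy^d$. For $m\ge 1$ and $0\le i\le d-1$, the coefficient of $x^{md-i}y^i$ in $g^m$ equals $m\,a_0^{m-1}a_i+P_{m,i}(a_0,\dots,a_{i-1})$ for a fixed polynomial $P_{m,i}$ (no $a_d$ occurs), while the coefficient of $x^{(m-1)d}y^d$ in $g^m$ equals $m\,a_0^{m-1}a_d+Q_m(a_0,\dots,a_{d-1})$. Hence, given a form $f$ of degree $md$ with nonzero leading coefficient, one solves successively for $a_0$ (a chosen $m$-th root of $[x^{md}]f$), then $a_1,\dots,a_{d-1}$ (uniquely, the pivots being $m\,a_0^{m-1}\ne 0$) so that $f-g^m$ is divisible by $y^d$; the remaining coefficient $a_d$ stays free and enters the leading coefficient of $(f-g^m)/y^d$ affine-linearly with nonzero slope $-m\,a_0^{m-1}$. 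Starting from $p$ with $[x^{kd}]p\ne 0$ — the genericity hypothesis in (i), and literally the hypothesis in (ii) — apply this with $m=k$ to obtain $p_0$ and $p-p_0^k=y^dq^{(1)}$ with $\deg q^{(1)}=(k-1)d$, then iterate with $m=k-1,\dots,2$ to extract $p_1,\dots,p_{k-2}$, and finally name the leftover degree-$d$ form $p_{k-1}$. Re-assembling the identities produces exactly \eqref{eq:gen}.

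For part (i), when $p$ is general every leading coefficient met along the way is nonzero once one makes the generic choice $a_d=0$ at each of the $k-1$ extraction steps — which is precisely the ``$y^d$ missing'' condition on $p_0,\dots,p_{k-2}$. With $a_d=0$ fixed, $a_0$ is pinned down up to an $m$-th root of unity and the remaining coefficients of $g$ are then forced by the triangular system, so each $p_i^{k-i}$ is uniquely determined (and $p_i$ is determined up to a $(k-i)$-th root of unity); uniqueness of the whole presentation follows by induction on $k$, the base case $k=1$ being trivial. For part (ii), with $p$ merely containing $x^{kd}$ one cannot set $a_d=0$; instead, at each step one exploits the freedom in $a_d$ to force the next remainder $q^{(j+1)}$ to have nonzero leading coefficient (possible since that leading coefficient is affine-linear in $a_d$ with nonzero slope), which keeps the recursion alive to the bottom and yields \eqref{eq:gen} with the $p_j$ now arbitrary degree-$d$ forms — at the cost of uniqueness, since distinct admissible choices of the $a_d$'s give genuinely different presentations.

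The only real obstacle is the bookkeeping behind the triangularity fact: one must check that matching the $d$ lowest-order coefficients of $g^m$ consumes exactly $a_0,\dots,a_{d-1}$ with invertible pivots, that this forces divisibility by $y^d$, and that the single leftover coefficient $a_d$ governs the new leading coefficient linearly. Granting this, both statements are a routine induction on $k$.
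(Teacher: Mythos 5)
Your proposal is correct and follows essentially the same route as the paper: an induction on $k$ that peels off one power at a time by matching the top $d$ coefficients (a triangular system solvable because the pivot is $m\,a_0^{m-1}\neq 0$), with the missing $y^d$ term giving uniqueness in (i) and the freedom in the $y^d$ coefficient rescuing existence (at the cost of uniqueness) in (ii). If anything, your explicit triangularity lemma is more careful than the paper's closed-form expression for $p_0$, which only matches the required coefficients to first order.
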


\begin{remark}
Observe that in case (i) the number of parameters in the right-hand side of \eqref{eq:gen} equals $kd+1$ which coincides with the dimension of the linear space of binary forms of degree  $kd$. In other words the right-hand side of \eqref{eq:gen} gives a canonical form of presentation of a general binary form of degree $kd$, comp. \cite{Re1}. 
\end{remark}

\begin{proof}

To prove (i), we will use induction on the power $k\ge 1$. 

Induction base. For $k=1$ and any $d\ge 1$, the statement is trivial since one can take $p= p_0$ in notation of Theorem~\ref{th:gen}.

Induction step. Assume that (i) is settled for all $d\ge 1$ and up to $k-1\ge 1$. Now given a binary form $p$ of degree $kd$, denote by $\widehat p=a_0x^{kd}+a_1x^{kd-1}y+\dots + a_{d-1}x^{(k-1)d+1}  y^{d-1}$ the form obtained by truncation of $p$ modulo all terms of the form $x^\ell y^{kd-\ell}$ with $\ell\ge d$. If $a_0\neq 0$ (which we can assume since $p$ is general), then one can find a form $p_0$ of degree $d$ such that 
$$p_0^k=\widehat p$$
modulo the same terms $x^\ell y^{kd-\ell}$ with $\ell\ge d$. Indeed, set 
$$p_0=a_0^{1/d}\left(x^d+\frac{1}{ka_0}(a_1 x^{d-1}y+a_2x^{d-2}y+\dots+a_{d-1}xy^{d-1})\right).$$
One can easily check that $p_0^k$ satisfies the above relation. Observe that $p-\widehat p$ is divisible  by $y^d$. For the quotient $(p-\widehat p)/q^d$ we obtain the same situation with $k$ substituted by $k-1$ and we can apply induction under the assumption that $(p-\widehat p)/y^d$ contains the term $x^{k-1}d$. Since we assume from the beginning that $p$ is general and our algorithm is deterministic, we can assume that the necessary terms are non-vanishing on each step of induction decreasing $k$ to $1$. (In fact the condition of generality can be written down rather explicitly in terms of non-vanishing of $k$ distinct discriminants.) 

(ii)  As we mentioned above, the only problem with the above representation under the assumption that $p$ contains $x^{kd}$ is that $(p-\widehat p)/y^d$ can miss the leading term $x^{k-1}d$. But if we allow to use $p_0$ with non-vanishing term $y^d,$ we can always obtain the latter condition. But then the choice of $p_0$ will be non-unique.  

\end{proof}

\begin{corollary}\label{cor:triv}
Any univariate polynomial $p(x)$ of degree at most $kd$ can be presented in the form 
$$\la x^{kd}+p_0^k(x)+p_1^{k-1}(x)+\dots +p_{k-2}^2(x)+ p_{k-1}(x),$$  
where $\la$ is a complex number and  $p_j(x),\; j=0,\dots , k-1$  are univariate polynomials of degree at most $d$.  \rm{(}Observe the similarity of the latter formula with \eqref{eq:even}.\rm{)}
\end{corollary}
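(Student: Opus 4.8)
The plan is to obtain Corollary~\ref{cor:triv} as an immediate consequence of Theorem~\ref{th:gen}(ii), simply by passing between univariate polynomials and binary forms via (de)homogenization. The point is that the shape~\eqref{eq:gen}, once restricted to the line $y = 1$, becomes precisely the claimed expression.

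Concretely, given $p(x)$ of degree at most $kd$, I would first homogenize it to the binary form $P(x,y) := y^{kd}p(x/y)$ of degree $kd$ (if $p(x) = \sum_{i=0}^{kd} c_i x^i$, this is $\sum_{i=0}^{kd} c_i x^i y^{kd-i}$); by construction $P(x,1) = p(x)$, and the coefficient of $x^{kd}$ in $P$ equals $c_{kd}$, the leading coefficient of $p$, which may vanish. To be able to invoke Theorem~\ref{th:gen}(ii), whose hypothesis is that the monomial $x^{kd}$ actually occurs, I would choose a scalar $\mu \in \bC$ distinct from $-c_{kd}$ and replace $P$ by $\widetilde{P} := P + \mu x^{kd}$; then $\widetilde{P}$ is a binary form of degree $kd$ containing $x^{kd}$, so part~(ii) applies and writes $\widetilde{P}$ in the form~\eqref{eq:gen} with binary forms $p_0,\dots,p_{k-1}$ of degree $d$.

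Finally I would dehomogenize. Setting $y = 1$ in~\eqref{eq:gen} turns each summand into the corresponding power of the univariate polynomial $p_j(x,1)$, which has degree at most $d$; writing $\lambda := -\mu$ and keeping the names $p_j$ for the restricted polynomials, the identity becomes exactly
\[
p(x) = \lambda x^{kd} + p_0(x)^k + p_1(x)^{k-1} + \dots + p_{k-2}(x)^2 + p_{k-1}(x).
\]
One may take $\mu = 0$, and hence $\lambda = 0$, whenever $\deg p = kd$, so the extra term $\lambda x^{kd}$ is only needed to absorb the defect in the case $\deg p < kd$ --- exactly as in Sylvester's even-degree formula~\eqref{eq:even}. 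Since Theorem~\ref{th:gen}(ii) is already proved, there is no genuine obstacle here; the only points requiring (minor) attention are the harmless choice of $\mu$ ensuring the hypothesis of~(ii) is met, and the trivial observation that the restriction of a degree-$d$ binary form to $y = 1$ yields a polynomial of degree at most $d$.
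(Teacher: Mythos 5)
Your proposal is correct and follows essentially the same route as the paper: both reduce to Theorem~\ref{th:gen}(ii) by homogenizing, adding a multiple of $x^{kd}$ when needed so that the monomial $x^{kd}$ is present, and then setting $y=1$. Your explicit choice of $\mu\neq -c_{kd}$ just makes precise the paper's brief remark about adding $\la x^{kd}$ when $\deg p < kd$.
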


\begin{proof}
If $p(x)$ is a polynomial of degree exactly $kd$, then  Corollary~\ref{cor:triv} is exactly statement (ii) of Theorem~\ref{th:gen}] in the non-homogeneous setting, i.e., for $y=1$. If $\deg p(x)< kd$, then adding a term $\la x^{kd}$, we obtain the previous situation. 
\end{proof}

\bibliographystyle{alpha}

\end{document}